\newtheoremstyle{dotless}{}{}{\itshape}{}{\bfseries}{}{ }{}
\theoremstyle{dotless}
\newcommand \eps {\varepsilon}
\newcommand \CC {\mathbb C}
\newcommand \R {\mathbb R}
\newcommand \RR {\mathbb R}
\newcommand \NN {\mathbb N}
\newcommand \ZZ {\mathbb Z}
\newcommand \QQ{\mathbb Q}
\newcommand \DD {\mathbb D}
\newcommand \cM {\mathcal{M}}
\newcommand \defin[1]{\emph{#1}}
\newcommand* \bb [1]{\left({#1}\right)}
\newcommand* \binr[2]{\left<{#1},{#2}\right>}
\newcommand* \bset[1] {\left\{{#1}\right\}}
\newcommand* \integrate[4]{\int_{#1}^{#2}{#3}d{#4}} 
\newcommand* \abs[1] {\left|{#1}\right|}
\newtheorem{thm}{Theorem}[section]
\newtheorem{lem}[thm]{Lemma}
\newtheorem{prop}[thm]{Proposition}
\newtheorem{cor}[thm]{Corollary}
\newtheorem{defn}[thm]{Definition}
\newtheorem{rmk}[thm]{Remark}
\newtheorem*{thm*}{Theorem}
\newtheorem*{lem*}{Lemma}
\newtheorem*{claim*}{Claim}
\newtheorem*{prop*}{Proposition}
\newtheorem*{cor*}{Corollary}
\newtheorem*{defn*}{Definition}
\newtheorem*{examp*}{Example}
\newtheorem*{rmk*}{Remark}
\newtheorem*{remind*}{Reminder}
\newtheorem*{exe*}{Exercise}
\newtheorem*{con*}{Conjecture}
\newtheorem{mainthm}{Theorem}
\renewcommand{\Re}{\operatorname{Re}}
\renewcommand{\Im}{\operatorname{Im}}
\newcommand{\dist}{\operatorname{dist}}
\newcommand{\ignore}[1]{ }
\begin{document}
\title{Computability in Harmonic Analysis}
\author{Ilia Binder, Adi Glucksam, Cristobal Rojas, Michael Yampolsky}
\maketitle

\begin{abstract}
We study the question of constructive approximation of the harmonic measure $\omega_x^\Omega$ of a connected bounded domain $\Omega$ with respect to a point $x\in\Omega$. In particular, using a new notion of {\it computable harmonic approximation}, we show that for an arbitrary such $\Omega$, computability of the harmonic measure $\omega^\Omega_x$ for a single point $x\in\Omega$  implies computability of $\omega_y^\Omega$ for {\it any } $y\in \Omega$. This may require a different algorithm for different points $y$, which leads us to the construction of surprising natural examples of continuous functions that arise as solutions to a Dirichlet problem, whose values can be computed at any point, but cannot be computed with the use of the same  algorithm on all of their domain. We further study the conditions under which the harmonic measure is computable uniformly, that is by a single algorithm, and characterize them for regular domains with computable boundaries.
\end{abstract}
\section{Introduction}

\subsection{Motivation}
Questions of constructive approximation lie at the heart of modern  Analysis and its applications.   A classical early example is the first constructive proof of the Riemann Mapping Theorem given by P.~Koebe \cite{Koebe} in the early 1900's.
At the same time,  C.~Carath{\'e}odory \cite{Car12} published his celebrated theory of prime end boundary extension of the Riemann Mapping. A constructive approach to Carath{\'e}odory theory, developed recently by a subset of the authors in \cite{BRY-car},   leads to a delicate interplay of Complex Analysis and Computability theory. A natural object to consider next is the collection of the harmonic measures $$\{\omega_x^\Omega\}_{x\in\Omega}$$
of a domain $\Omega$, which, in the  case of a Jordan domain $\Omega$, are the pushforwards of the Lebesgue measure by the Carath{\'e}odory extensions of the Riemann Mappings (normalized at $x$)  from the unit circle to $\partial \Omega$.

In this paper, we consider the measures $\{\omega_x^\Omega\}$ from the constructive point of view for an arbitrary bounded connected domain $\Omega$ in an arbitrary dimension.
We want to understand what data about $\Omega$ is required in order to algorithmically approximate its harmonic measure in the weak sense, and, conversely, what we can learn about $\Omega$ from the ability to computably sample $\omega_x^\Omega$.

We encounter several surprises. Firstly, as our Theorem \ref{thm:allcompute} below demonstrates, the existence of an algorithm able to sample $\omega_x^\Omega$ at a single point $x = x_0\in\Omega$ is sufficient to guarantee the existence of such an algorithm for any other point $x$ in the domain $\Omega$. The proof of this theorem is based on a new concept of harmonic approximation of domains, which builds on the ideas of another work of the subset of the authors \cite{Binder2019}. We show that having an algorithm for approximating a domain harmonically is equivalent to being able to compute all of its harmonic measures.

Intriguingly, different algorithms may be required for different points $x\in\Omega$ in the above theorem, and we present an example of a domain $\Omega$ for which this is indeed the case in Theorem \ref{thm:nocompute_measure}. This naturally leads to an example of a Dirichlet problem for $\Omega$ whose unique solution $u(x)$ is a continuous real valued function which is piece-wise computable but not computable -- that is, different algorithms are required to approximate its values at different points $x$ (Theorem \ref{thm:nocompute_Dirichlet}). This is the first example we know of a ``natural'' such function and is certainly the first one known in constructive analysis.

This phenomenon does not occur under some natural assumptions on geometric computability of domain itself. In Theorem \ref{thm:uni_comp}, we essentially show that for such domains the existence of an algorithm sampling harmonic measure at one point implies the existence of an algorithm which computes harmonic measure at every point of the domain.

Moreover, in Theorem \ref{thm:computable_boundary} we find a necessary and sufficient condition for a regular domain with a computable boundary that guarantees the computability of its harmonic measure, the \emph{computable regularity}, defined below. The class of computably regular domains include simply connected planar domains and planar uniformly perfect domains.

As shown in \cite{BBRY}, harmonic measures can be sometimes uniformly computed for some natural domains with non-computable boundaries. In Theorem \ref{thm:lower_computable}, we produce a completely geometric characterization of all computably regular domains for which the harmonic measures can be computed.

It was proven in \cite{Binder2019} that for simply connected planar domains the weak convergence of harmonic measures is equivalent to the classical Carath{\'e}odory convergence. In light of these results, it is not surprising that the condition of Theorem \ref{thm:lower_computable} is exactly the same as the condition for the existence of computable Riemann bijection in \emph{Computable Riemann Mapping Theorem} (see \cite{BB}).

\subsection{Setting the stage}\label{stage} Let us briefly review the relevant notions of Computability Theory; for a detailed discussion, the reader is referred to \cite{BBRY} or \cite{BY-book}. As is standard, we formalize the notion of an algorithm, or a computer program, like a Turing Machine (TM), and we will use these terms interchangeably.
\begin{itemize}
\item A  function $f$ from $S\subset \QQ^d$ to $\QQ^m$ is \emph{computable} if there exists a TM  $\cM$ which, upon input $q\in S$, outputs $f(q)$.
\item A real number $x$  is \emph{computable} if  there exists a TM $\cM$ which, upon input $n\in\NN$, outputs
  $q_n\in\QQ$ such that
  $$|x-q_n|<2^{-n},$$
  computable elements of $\CC$ and $\RR^d$ are similarly defined.

\item Turing \cite{Tur} also produced an example of a non-computable real number  based on the famous {\it Halting problem} which can be formulated as follows: {\it verify from the text of a computer program, whether this program will terminate, or will execute indefinitely}. Turing showed in \cite{Tur}  that the Halting problem is algorithmically unsolvable (that is, a TM which realizes such verification cannot exist). Enumerating all algorithms in some way as $\bset{\cM_n}$ (for instance, listing all possible finite combinations of characters allowed by a programming language in lexicographic order), we can see that the number
\begin{equation}\label{alpha}
\alpha=\sum 3^{-n}p(n),\text{ where }p(n)=1\text{ if }\cM_n\text{ halts, and }p(n)=0\text{ otherwise }
\end{equation}
is not computable.

\item The notion of an argument of a function of a real variable is formalized as follows: a function $\phi:\NN\to \QQ$
  is an {\it oracle} for a number $x\in \RR$ if
  $$|\phi(n)-x|<2^{-n}.$$
  An \emph{oracle TM} $\cM^\phi$ can query a value of $\phi(n)$ for any $n$. Oracles for points in $\RR^d$ or $\CC$ are defined in a similar fashion.
\item A function $f$ from a set $S\subset \RR^d$ to $\RR^m$ is \emph{computable} if there exists a TM $\cM^\phi$ with an oracle for $x\in S$ which, upon an input $n$, outputs $q_n\in\QQ^m$ such that
  $$||q_n-f(x)||<2^{-n}.$$
  This means, in particular, that for a given $x\in S$ and $\eps=2^{-n}$, there is an algorithm to find  $\delta(\eps)=2^{-k}$ such that for $||s-x||<\delta(\eps)$ we have $||f(s)-f(x)||<\eps$  (here $k$ is the largest value for which $\cM^\phi$ queries $\phi(k)$ to output $q_n$). In particular, \emph{all computable functions are continuous on their domains of definition}. The definition of a computable function extends to computable metric spaces (see \cite{BBRY}).

%

  \item A sequence of computable objects $\mathcal{O}_1,\mathcal{O}_2,\dots$ is \defin{uniformly computable} if there is a single Turing Machine that on input $(n,m)$, computes an approximation of $\mathcal{O}_n$ with the error bounded by $2^{-m}$.

  \item An open set $U \subset \R^d$ is \emph{lower computable} if there is a uniformly computable sequence of rational balls $(B_i)_i$ whose union exhausts $U$.

\item Finally, a measure $\mu$ on a Borel set in $\RR^d$ is a \emph{computable measure} if for every rational cube $C\subset \RR^d$ and  every sequence $(f_{j}:\RR^d\to\RR)_{j\in\NN}$ of uniformly computable functions in $\RR^d$ there exists a TM $\cM$ which on input $(j,n)\in\NN^2$ outputs a rational $I_{j,n}$ satisfying
$$
|I_{j,n} - \int_C f_{j}\,d\mu | < 2^{-n}.
$$
In other words, a computable measure can be algorithmically approximated in the weak sense. We note that restricting functions $f$ to computable $1$-Lipschitz functions (i.e. the functions satisfying $|f(x)-f(y)|\leq \|x-y\|$) produces an equivalent definition.
\end{itemize}

When an object is computable by a TM $M^\phi$ with an oracle for $x$, we will say that it is \emph{computable relative to }$x$. For instance, a computable function $f:S\to \RR^m$ is computable at every $x\in S$ relative to $x$. Moreover, since the computation for every $x\in S$ is performed by the same oracle TM $M^\phi$ with an oracle for $x$, we say that the function $f$ is {\it uniformly} computable relative to $x\in S$. On the other hand, a function which is computable at every $x\in S$ relative to $x$ does not need to be a computable function $S\to\RR^m$ since the computation may need to be performed by different algorithms for different pieces in $S$.

A trivial example is given by any discontinuous function. For instance, if $f=\operatorname{sign}(x)$ is the standard sign function on $\RR$ which assumes only three values: $-1$, $0$, $+1$, then $f$ is computable relative to $x$ at every point $x\in \RR$. However, it is not a computable function $\RR\to\RR$ since it is only piecewise continuous. Intuitively, when an algorithm reads a sequence of finite rational  approximations of $x$ one by one  through an external input and keeps seeing zeros, it has no way of knowing whether $x$ is truly a zero, or just a very small positive or negative number. The algorithm has to output a value after a finite number of queries of the oracle, so it will be left guessing (sometimes wrongly). But if we exclude $0$ from the domain of the sign function, then it becomes uniformly computable -- the algorithm will just need to query the rational approximations of $x$ through the  oracle long enough  to get a determination of the sign of $x$ in this case.

It is much harder to give an example of a continuous non-computable function, which is nonetheless computable relative to $x$ at every $x$ of its domain (just not by the same algorithm). Surprisingly, as we will see below, such examples may arise naturally in our context.

In view of the above discussion,
given a domain $\Omega$ and $x\in\Omega$, we say that $\Omega$  has a \emph{ computable harmonic measure at $x$} if $\omega_x^\Omega$ is computable relative to $x$. In case there is a single machine with an oracle for $x$ that computes $\omega_x^\Omega$ for every $x$ (in other words, $\omega^\Omega_x$ is uniformly computable relative to $x$), we will say that $\Omega$ has a \emph{uniformly computable harmonic measure}.

 We say that a function $$\tau:\Omega\times \Omega\to \RR_{>0}$$ is a \emph{Harnack bound} for the domain $\Omega$ if, for any positive harmonic function $u$ and every pair of points $z,\ w\in\Omega$, we have
$$\frac{1}{\tau(z,w)}\leq\frac{u(z)}{u(w)}\leq \tau(z,w).$$
A {\sl computable Harnack bound}  is a computable function $\tau$ with this property.


We say $\Omega$ is {\it computably regular} if there exists a computable positive function $\eps(n)\,:\,\NN\mapsto\QQ$ so that
$$
\dist(x,\partial\Omega)<\eps(n)\Rightarrow \omega_x^\Omega(B(x,2^{-n}))>1-2^{-n}.
$$
Observe that any computably regular domain is regular. It is an easy consequence of Beurling Projection Theorem that any simply connected planar domain is computably regular.

\subsection{Overview of the results}

All the domains considered in this paper are assumed to be bounded.

Let us start by recalling that in \cite[\S 5.2]{BBRY}, the authors give an example of a domain $\Omega$ for which $\partial\Omega$ is computable in the strongest possible sense (see Section \ref{approximations}), but whose harmonic measure is not computable. It follows that the computability of these objects must involve data about $\Omega$ which is from a somewhat different nature.

Our first result says that if at one point $x_0$ there exists an algorithm to compute $\omega_{x_0}^\Omega$, then at any other point $x\in\Omega$ such an algorithm  also exists. Note that no assumptions about $\Omega$ are made.  The proof is based on the notion of {\it computable harmonic approximations} introduced below, which characterizes the local computability of the measure  (see Theorem \ref{lem:harm_measure_iff_approx}).

\medskip
\begin{mainthm}\label{thm:allcompute}
 Let $\Omega$ be a connected domain and let $x_0\in \Omega$. If $\Omega$ has a computable harmonic measure at $x_0$, then $\Omega$ has a computable harmonic measure at $x$, for every $x\in \Omega$.
\end{mainthm}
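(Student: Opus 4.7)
The plan is to route the proof through the notion of \emph{computable harmonic approximation} (CHA), whose equivalence with pointwise computability of the harmonic measure is formalized later as Theorem \ref{lem:harm_measure_iff_approx}. Informally, a CHA of $\Omega$ relative to $y \in \Omega$ is a uniformly computable sequence of well-behaved subdomains $D_n \subset \Omega$ containing $y$, together with a computable rate $\eps(n) \to 0$ controlling, uniformly in a computable family of $1$-Lipschitz test functions $f$, the error $\bigl|\int f\, d\omega^{D_n}_y - \int f\, d\omega^\Omega_y\bigr|$. Accepting this equivalence, Theorem A reduces to an effective structural statement: a CHA at $x_0$ can be converted, using only the oracle for a target point $x$, into a CHA at $x$.

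The conversion has two steps. First, a constructive-geometry step: since the $D_n$ form a lower-computable exhaustion of the component of $\Omega$ containing $x_0$ and $\Omega$ is connected, the oracle for $x$ allows one to effectively find an index $n_0$ and a finite Harnack chain of rational balls $B_0,\dots,B_N$ in $D_{n_0}$ with $x_0 \in B_0$, $x \in B_N$, and $\overline{B_i \cup B_{i+1}} \subset D_{n_0}$; after restricting to $n \geq n_0$ we may assume $x$ and the whole chain lie in every $D_n$. Second, an analytic step: for each computable $1$-Lipschitz $f$ the Dirichlet solutions $u_n(y) := \int f\, d\omega^{D_n}_y$ and $u(y) := \int f\, d\omega^\Omega_y$ are harmonic on $D_n$ and uniformly bounded by $\|f\|_\infty$. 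The CHA hypothesis, exploited jointly across test functions, upgrades to the statement that $u_n - u \to 0$ uniformly on a small neighborhood of $x_0$ at rate proportional to $\eps(n)$. Iterating a two-constants / Harnack oscillation estimate along the chain $B_0,\dots,B_N$ then yields $|u_n(x) - u(x)| \leq C\,\eps(n)^\alpha$ for computable constants $C>0$ and $\alpha\in(0,1]$ depending only on the chain. This provides the required CHA at $x$, and hence, by reversing the equivalence, the desired computability of $\omega^\Omega_x$.

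The main obstacle is the analytic step. The subtlety is that a single-point bound $|u_n(x_0)-u(x_0)|\leq\eps(n)$ on a signed bounded harmonic function does not, on its own, propagate via Harnack to give smallness elsewhere: Harnack applies only to nonnegative harmonic functions, and the uniform bound $\|u_n - u\|_\infty \leq 2\|f\|_\infty$ prevents the naive inequality from being useful. The two-constants lemma remedies this, but only once the single-point bound is first strengthened to a bound on a whole small neighborhood of $x_0$. Producing such a neighborhood bound effectively uses the full strength of the CHA (control of integrals against the entire computable $1$-Lipschitz family, rather than one $f$ at a time) together with an equicontinuity / compactness argument for the harmonic family $\{u_n - u\}$. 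Carefully tracking effectiveness through this compactness argument, and verifying that all constants along the Harnack chain are computable from the oracle for $x$ and the lower-computable structure of $\Omega$ encoded in $\{D_n\}$, is the technical crux of the proof.
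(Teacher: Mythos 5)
Your overall skeleton---reduce to computable harmonic approximations via Theorem \ref{lem:harm_measure_iff_approx}, join $x_0$ to $x$ by a connected set inside a computable polygon $Q\Subset\Omega$, discard finitely many indices so that $Q$ lies in every approximating polygon, and transfer the approximation from $x_0$ to $x$ by Harnack-type estimates---is exactly the paper's, and you correctly isolate the crux: $h_n:=f^{P_n}-f^{\Omega}$ is a \emph{signed} bounded function controlled only at the single point $x_0$, while Harnack requires nonnegativity. But your resolution of that crux has a genuine gap. The only neighbourhood bound extractable from $|h_n(x_0)|\le\eps(n)$, $|h_n|\le 2$ and harmonicity is the interior gradient estimate $|h_n(y)|\le\eps(n)+C\,|y-x_0|$, i.e.\ smallness of order $r$ on $B(x_0,r)$; the linear function $h(y)=(y-x_0)\cdot e$ is harmonic, bounded on $Q$, satisfies every such certified bound for every $n$, and is of order $1$ at $x$. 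Hence no two-constants, three-spheres or equicontinuity argument using only these data can conclude, and the quantitative versions fail on their own terms as well: the additive two-constants inequality leaves a non-vanishing term $M(1-\theta)$ with $\theta=\omega_x(\partial B)<1$; the multiplicative form $\eps^{\theta}M^{1-\theta}$ requires $\log|h|$ to be subharmonic, which fails for harmonic $h$ (for positive harmonic $v$, $\Delta\log v=-|\nabla v|^2/v^2\le 0$); and even a genuine three-spheres inequality applied to a ball of radius $\rho_n\sim\eps(n)$ has exponent $\alpha(n)\to 0$, so $\eps(n)^{\alpha(n)}$ does not tend to $0$.

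The ingredient you are missing is the third clause in the paper's definition of harmonic approximation, which your informal version of it omits: $\omega_{x_0}^{\Omega}(\partial\Omega\cap\overline{P_n})<2^{-n}$. In Lemma \ref{lem:harnack1} this is used to manufacture positivity rather than to shrink a neighbourhood. For a \emph{subharmonic} test function $f$ (Lemma \ref{lem:computable_decomposition} shows this class suffices), $w:=f^{P_{k_n}}-f^{\Omega}$ is subharmonic on $\Omega$, vanishes at regular boundary points outside $\overline{P_{k_n}}$ and is at most $1$, so the maximum principle gives $w\le u$ where $u(y):=\omega_y^{\Omega}(\partial\Omega\cap\overline{P_{k_n}})$. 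Now $u\ge 0$ and $v:=u-w\ge 0$ are harmonic on $Q$, and clauses 2 and 3 of the approximation at $x_0$ give $u(x_0),v(x_0)=O(2^{-k_n})$; the uniform Harnack bound along the chain in $Q$ (Lemma \ref{uniform_harnack_bound}) transfers both to $x$, yielding the two-sided estimate $|f^{\Omega}(x)-f^{P_{k_n}}(x)|<2^{-n}$ after choosing $k_n$ with $2^{2-k_n}C<2^{-n}$. Replacing your two-constants/equicontinuity step by this majorant construction closes the gap; the rest of your outline then goes through as written.
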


\noindent
 Interestingly, as the following example shows, the computability of harmonic measure in the above result does not need to be uniform, and different machines may be required depending on the point $x$.

\begin{mainthm}\label{thm:nocompute_measure}
There exists a regular domain $\Omega$ such that:
\begin{itemize}
\item[{i)}]  $\Omega$ has a computable harmonic measure at $x$, for every $x\in \Omega$;
\item[{ii)}] The harmonic measure of $\Omega$ is not uniformly computable.
\end{itemize}
\end{mainthm}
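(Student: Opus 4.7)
The plan is to construct $\Omega$ as the unit disk $\D \subset \R^2$ with a sparse halting-coded family of tiny closed disks removed along a ray toward a boundary point; to deduce (i) by applying Theorem~\ref{thm:allcompute} to one cleverly chosen point $x_0$ whose coordinates themselves encode the halting sequence; and to deduce (ii) by diagonalizing any hypothetical uniform algorithm against a sequence of computable test points sitting just outside each removed disk.

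\medskip

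\emph{Construction.} Fix centers $p_n = (1 - 2^{-n-2}, 0)$ and radii $r_n = e^{-2^n}$, so that the closed disks $\overline{B(p_n, r_n)}$ are pairwise disjoint and lie inside $\{\Re z > 1/2\} \cap \D$. Let $D_n = \overline{B(p_n, r_n)}$ if $\cM_n$ halts and $D_n = \emptyset$ otherwise, and set $\Omega = \D \setminus \bigcup_n D_n$. Standard planar-capacity estimates give a total-variation bound of $C/\log(1/r_n) \leq 2^{-n}$ for the difference between $\omega_y^{\D}$ and $\omega_y^{\D \setminus D_n}$ at any $y$ with $\dist(y, p_n) \geq 2^{-n-3}$, so the truncations $\Omega_N = \D \setminus \bigcup_{n \leq N,\, \cM_n \text{ halts}} D_n$ satisfy $|\omega_{x_0}^\Omega - \omega_{x_0}^{\Omega_N}|_{TV} \leq 2^{-N+1}$ for every $x_0 \in \{\Re z < 1/2\} \cap \D$. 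Each $\Omega_N$ is a computable regular domain with smooth boundary, and $\Omega$ itself is regular.

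\medskip

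\emph{Part (i).} By Theorem~\ref{thm:allcompute} it suffices to exhibit one $x_0 \in \Omega$ with $\omega_{x_0}^\Omega$ computable relative to $x_0$. Take $x_0 = (\alpha_0, 0)$ where $\alpha_0 \in (0, 1/2)$ is a minor padding-shift of the halting-encoding real $\alpha$ from \eqref{alpha}, chosen so that each bit $p(n)$ is uniformly decodable from rational approximations to $\alpha_0$ (for instance, arrange every $\{3^n \alpha_0\}$ to lie in $[1/3, 2/3]$, bounded away from integer cuts). Then $x_0 \in \Omega$ since $\alpha_0 < 1/2$ while every $p_n$ has real part $>1/2$ and $r_n$ is negligible. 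The algorithm for $x_0$, on input $m$, sets $N = m+2$, reads $p(1), \ldots, p(N)$ from the oracle for $x_0$, builds the computable regular domain $\Omega_N$, and runs a standard Dirichlet solver on $\Omega_N$ to approximate $\omega_{x_0}^{\Omega_N}$ within $2^{-m-1}$. The tail estimate from the construction converts this into a $2^{-m}$-approximation of $\omega_{x_0}^\Omega$, and Theorem~\ref{thm:allcompute} then yields the required per-point algorithms at every other $y \in \Omega$.

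\medskip

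\emph{Part (ii).} Suppose for contradiction that a single oracle TM $\cM^\phi$ computes $\omega_y^\Omega$ for every $y \in \Omega$. Put $y_n = p_n + (2r_n, 0)$ and $V_n = B(p_n, 3r_n)$; both are uniformly computable, and the pairwise disjointness of the $D_k$ together with the chosen scales ensure that $V_n \cap \partial\Omega = \partial B(p_n, r_n)$ if $\cM_n$ halts and $V_n \cap \partial\Omega = \emptyset$ otherwise. A direct hitting-probability estimate for Brownian motion from $y_n$ (sitting at distance $2r_n$ from the radius-$r_n$ disk) gives an absolute constant $\delta > 0$ with $\omega_{y_n}^\Omega(V_n) \geq \delta$ in the halting case and $\omega_{y_n}^\Omega(V_n) = 0$ otherwise. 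Running $\cM^\phi$ on $y_n$ to precision $\delta/3$ then decides halting of $\cM_n$, contradicting undecidability. The central difficulty is the quantitative balance between the two parts: the radii $r_n$ must decay fast enough (doubly-exponentially in the plane, because of the logarithmic decay of planar harmonic-measure effects) that only finitely many halting bits matter at $x_0$ for any fixed precision, yet the test points at distance $\sim r_n$ must still produce a constant-order signal; the choice $r_n = e^{-2^n}$ is engineered to meet both demands.
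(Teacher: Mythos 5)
Your part~(ii) is essentially the paper's own diagonalization (computable test points marching toward $\partial\Omega$, a computable positive margin separating the halting and non-halting cases) and is fine. The gap is in part~(i), and it is structural rather than technical. Your dichotomy ``remove the disk $D_n$ if $\cM_n$ halts, remove nothing otherwise'' makes the halting bit visible from \emph{every} point of the domain: from the computable point $0$, the harmonic mass sitting on $\partial D_n$ (when present) is of order $G_{\D}(0,p_n)/\log(1/r_n)\asymp 4^{-n}$, where $G_\D$ is the Green's function of the disk, so integrating $\omega_0^\Omega$ against a uniformly computable bump near $p_n$ to precision $\asymp 4^{-n}$ decides whether $\cM_n$ halts. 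Hence $\omega_0^\Omega$ is not computable relative to $0$, and part~(i) fails at $x=0$. Encoding the halting sequence into the base point $x_0$ does make $\omega_{x_0}^\Omega$ computable relative to $x_0$, but it cannot be transported to other points the way you want: the algorithm produced by Theorem~\ref{thm:allcompute} via Lemma~\ref{lem:harnack1} requires oracles for \emph{both} $x_0$ and the target point $x$. When $x_0$ is computable --- as in every application in the paper --- the $x_0$-oracle is free and one genuinely obtains computability relative to $x$ alone; when $x_0$ non-computably encodes the very data defining $\Omega$, that oracle cannot be discarded, and the conclusion ``computable relative to $x$'' at a computable $x$ would contradict the non-computability of $\omega_0^\Omega$ just described. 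So your use of Theorem~\ref{thm:allcompute} is exactly in the regime where it does not deliver what you need. (A smaller issue: your ``total variation'' bound between $\omega_{x_0}^\Omega$ and $\omega_{x_0}^{\Omega_N}$ is stronger than what the capacity estimate gives; the weak-$*$ bound of Lemma~\ref{lem:compare_harmonic} is what you actually have, and is what the definition of computable measure requires.)

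The paper's construction is designed precisely to avoid this trap. For each $n$ it prepares \emph{two nonempty} candidate excised compacta $A_n^k$ and $A_n^\infty$ inside the same tiny ball $B(x_n,r_n^n)$, engineered so that their symmetric difference has harmonic measure at most $2^{-n-k}$ seen from $0$ (condition~\eqref{eq:close_to_infinity}). Consequently the algorithm at $0$ never has to decide membership in the non-computable set $B$: it runs the enumeration $\{b_k\}$ for finitely many steps, uses whichever alternative is currently indicated, and the unresolved cases contribute a summable, controlled error. This makes $\omega_0^{\Omega^*}$ outright computable; Theorem~\ref{thm:allcompute} with the \emph{computable} base point $0$ then yields a per-point algorithm at every $x$ whose only non-uniformity is a finite dyadic polygon of advice, while the two alternatives remain distinguishable from the nearby points $x_n$ with a computable margin, which is what defeats uniform computability. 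The moral your proposal misses is that the non-uniformity in part~(i) must be realizable as finite advice per point (which a Turing machine can carry), not as an infinite non-computable oracle; your ``disk versus nothing'' coding puts the full halting set into the measure at every point and thereby destroys part~(i) rather than part~(ii).
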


\medskip
\noindent
As a remarkable consequence of this example we obtain the following:

\begin{mainthm}\label{thm:nocompute_Dirichlet}
There exists a regular domain $\Omega$ and a computable simple function $f: \partial\Omega \to \R$ such that the unique solution $u(x):\Omega \to \R$ to the Dirichlet problem with the boundary data given by $f$ satisfies:
\begin{itemize}
\item[i)] For each $x\in \Omega$ the value $u(x)$ is computable relative to $x$;
\item[ii)] $u$ is not a computable function.
\end{itemize}
\end{mainthm}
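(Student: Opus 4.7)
\textbf{Proof plan for Theorem \ref{thm:nocompute_Dirichlet}.} The plan is to deduce Theorem \ref{thm:nocompute_Dirichlet} directly from Theorem \ref{thm:nocompute_measure} by exhibiting an appropriate boundary datum on the domain constructed there. Let $\Omega$ be the regular domain furnished by Theorem \ref{thm:nocompute_measure}. For any bounded Borel $f:\partial\Omega\to\R$ the solution to the Dirichlet problem is given by the Poisson integral
\[
u(x) \;=\; \int_{\partial\Omega} f\,d\omega_x^\Omega, \qquad x\in\Omega,
\]
so the task reduces to selecting a computable simple function $f$ such that integration against $\omega_x^\Omega$ isolates the non-uniformity provided by Theorem \ref{thm:nocompute_measure}(ii).

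Part (i) will be essentially immediate from Theorem \ref{thm:nocompute_measure}(i). Fixing $x\in\Omega$, we have an algorithm computing $\omega_x^\Omega$ in the weak sense relative to $x$. For a simple function $f=\sum_i c_i\indic{A_i}$ whose level sets $A_i$ are clopen subsets of $\partial\Omega$ with $\omega_x^\Omega(\partial A_i)=0$, each integral $\int \indic{A_i}\,d\omega_x^\Omega$ may be computed relative to $x$ by sandwiching $\indic{A_i}$ between $1$-Lipschitz Urysohn approximants and invoking the weak-computability definition of $\omega_x^\Omega$. This yields computability of $u(x)$ relative to $x$.

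For part (ii) the goal is to pick a specific $f$ for which $u$ cannot be uniformly computable on $\Omega$. The strategy is to revisit the construction of $\Omega$ in the proof of Theorem \ref{thm:nocompute_measure} and locate within it a clopen piece $S\subset\partial\Omega$ whose mass $\omega_x^\Omega(S)$ is, by design, not uniformly computable in $x$. Taking $f=\indic{S}$ (or a finite sum of such indicators isolating the obstruction) then gives $u(x)=\omega_x^\Omega(S)$, which cannot be a uniformly computable function of $x\in\Omega$. The principal obstacle is to verify that the non-uniformity established in Theorem \ref{thm:nocompute_measure} is already witnessed at the level of a single clopen piece of the boundary, rather than diffused across the entire class of continuous test functions. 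I expect this to follow from the expected structure of the domains constructed in Theorem \ref{thm:nocompute_measure}, whose boundaries naturally decompose into well-separated clopen components indexed by the enumeration of Turing machines used in the definition of $\alpha$ in \eqref{alpha}; the specific piece encoding the undecidable information is the natural candidate for $S$.
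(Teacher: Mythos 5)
There is a genuine gap, and it is one of logical direction. You propose to take the domain of Theorem \ref{thm:nocompute_measure} as a black box and extract from its statement a single computable boundary datum $f$ whose Poisson integral is non-computable. But the statement of Theorem \ref{thm:nocompute_measure} only asserts that no single machine computes $\omega_x^\Omega$ weakly for all $x$; this does not formally yield a single fixed computable $f$ (simple or otherwise) for which $x\mapsto\binr{\omega_x^\Omega}{f}$ fails to be computable --- the non-uniformity could a priori be spread across an infinite family of test functions, with each individual $\binr{\omega_x^\Omega}{f_j}$ computable in $x$ by some machine $M_j$ that cannot be produced uniformly in $j$. You flag this yourself as ``the principal obstacle,'' but resolving it forces you to open up the construction of $\Omega$, at which point you are no longer reducing Theorem \ref{thm:nocompute_Dirichlet} to Theorem \ref{thm:nocompute_measure}. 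The paper in fact runs the implication the other way: it builds the domain $\Omega^*$ and a fixed boundary function $f$ (equal to $1$ on the arc $\{|z|=1,\ \Re z\le -1/2\}$ and $0$ where $\Re z\ge 0$) simultaneously, proves Theorem \ref{thm:nocompute_Dirichlet} directly, and then observes that Theorem \ref{thm:nocompute_measure} is a corollary.

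Your structural guess is partly right and partly off. The boundary of $\Omega^*$ does decompose into the unit circle plus small compacts $C_n$ near $x_n=1-2^{-n}$, whose shape encodes membership of $n$ in a non-computable lower-computable set $B$. However, the witness $f$ is \emph{not} the indicator of the ``undecidable piece'': it is supported on a fixed arc far from all the $C_n$, and the non-uniformity is detected by varying the evaluation point $x_n$ rather than the test set. The key quantitative input, which your proposal has no substitute for, is the computable dichotomy: if $n\notin B$ then $\binr{\omega_{x_n}^{\Omega^*}}{f}\ge 2^{-\ell_n}$ (via the comparison domains $E_n$ and the maximum principle), while if $n\in B$ an arc $A_n^k$ nearly surrounds $x_n$ and forces $\binr{\omega_{x_n}^{\Omega^*}}{f}<2^{-\ell_n-2}$, with $\ell_n$ computable; a uniform algorithm for $u$ would then decide $B$. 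Separately, for part (i) your Urysohn-sandwich argument for a simple $f=\sum_i c_i\indic{A_i}$ needs a computable rate for $\omega_x^\Omega(\text{nbhd of }\partial A_i)\to 0$, not merely $\omega_x^\Omega(\partial A_i)=0$; the paper sidesteps this by taking $f$ continuous and invoking the weak computability of $\omega_x^{\Omega^*}$ at each $x$ (Lemma \ref{lem:harm_measure_computable} together with Theorem \ref{thm:allcompute}), which is the part of your plan that does match the paper.
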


As demonstrated in the next Theorem, this phenomenon cannot happen for lower computable domains.

\begin{mainthm}\label{thm:uni_comp}
Let $\Omega$ be a lower computable domain which has a computable harmonic measure at $x_0$ for some computable $x_0\in\Omega$. Then  $\Omega$ has uniformly computable harmonic measures.
\end{mainthm}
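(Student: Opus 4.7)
The plan is to upgrade the pointwise computability furnished by Theorem~\ref{thm:allcompute} to a uniform statement, by using lower computability of $\Omega$ to supply, uniformly in $x$, the auxiliary geometric data that the pointwise construction uses implicitly.  It suffices to produce a single oracle TM which, given an oracle for $x\in\Omega$, an index $j$ for a uniformly computable $1$-Lipschitz test function $f_j$ on $\RR^d$, and a precision $2^{-n}$, outputs a rational $2^{-n}$-approximation of $u_{f_j}(x):=\int f_j\,d\omega_x^\Omega$; by the $1$-Lipschitz reformulation in Section~\ref{stage}, this is an equivalent definition of uniform computability of $\omega_x^\Omega$.  The computability of $\omega_{x_0}^\Omega$ together with the Harnack chains constructed below provides an effective bound $M_j$ on $|u_{f_j}|$ on any compact $K\subset\Omega$ reached by such a chain.

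First I would reduce the computation at a general $x$ to a computation at a nearby rational point. Reading finitely many bits of the oracle to localize $x$ and searching through the uniformly computable sequence of rational balls exhausting $\Omega$, one effectively produces a rational ball $B(q,r)\subset\Omega$ with $x\in B(q,r/2)$ and $r\in\QQ_{>0}$ explicit. Standard interior gradient estimates give $|\nabla u_{f_j}|\leq C_d M_j/r$ on $B(q,r/2)$, hence $|u_{f_j}(x)-u_{f_j}(q)|\leq C_d M_j |x-q|/r$. Reading additional bits of the oracle, one can then refine $q$ to a rational satisfying $|x-q|<r\cdot 2^{-n-1}/(C_d M_j)$, so that it suffices to compute $u_{f_j}(q)$ within $2^{-n-1}$ for the explicit rational $q\in\Omega$, uniformly in the input.

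The core step is to perform this latter computation uniformly in $q$.  Here I would go inside the proof of Theorem~\ref{thm:allcompute}, which via Theorem~\ref{lem:harm_measure_iff_approx} reduces the computation of $\omega_q^\Omega$ to producing a computable harmonic approximation at $q$ out of one at $x_0$, provided an effective Harnack-type link between $q$ and $x_0$. Lower computability makes this link uniform in $q$: from the uniformly computable exhaustion of $\Omega$ one effectively enumerates a chain of overlapping rational balls $B_1,\dots,B_N\subset\Omega$ with $x_0\in B_1$ and $q\in B_N$; the chain's combinatorics (length, radii, overlaps) yield a computable Harnack bound $\tau_q$ between $q$ and $x_0$.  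Feeding $\tau_q$, together with the lower-computable data of $\Omega$, into the construction underlying Theorem~\ref{thm:allcompute} yields the desired algorithm for $u_{f_j}(q)$, uniformly in the rational $q\in\Omega$.

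The main obstacle is precisely this uniformity claim: one must check that the constants and control parameters appearing in the proof of Theorem~\ref{thm:allcompute} depend in an effectively computable manner on the target point rather than merely existing for each fixed target. Lower computability is essential here, as it is exactly what allows the connecting chain, the interior ball, and the Harnack bound to be exhibited as uniformly computable functions of $q$; the contrast with Theorem~\ref{thm:nocompute_measure} confirms that without effective access to the interior of $\Omega$ the pointwise algorithms can genuinely fail to assemble into a uniform one, so this hypothesis cannot be dropped.
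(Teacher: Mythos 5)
Your proposal is correct and follows essentially the same route as the paper: lower computability is used to effectively produce a connecting region between $x_0$ and $x$ (your chain of overlapping rational balls; the paper's dyadic polygon $Q$ with a connected set $\gamma$ joining the two points), which is then fed as an explicit input to the algorithm underlying Theorem~\ref{thm:allcompute} (Lemma~\ref{lem:harnack1}), whose uniformity in that input is exactly the point. Your preliminary reduction to a nearby rational point via interior gradient estimates is a harmless detour that the paper avoids by letting that algorithm take an oracle for $x$ directly.
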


We do not know if {\it every} domain with uniformly computable harmonic measures is lower computable. This is an interesting question, which is open even when $\Omega$ is a planar domain.


Let us now return to the original question: what geometric conditions would guarantee the computability of the harmonic measure of a domain at a computable point? For regular domains with computable boundaries, there is a necessary and sufficient condition.

\begin{mainthm}\label{thm:computable_boundary}
Let $\Omega$ be a regular domain with a computable boundary. Then the following are equivalent:
\begin{enumerate}
\item $\Omega$ is computably regular.
\item $\Omega$ has a computable harmonic measure at $x_0$ for some computable $x_0\in\Omega$.
\item $\Omega$ has uniformly computable harmonic measures.
\end{enumerate}
\end{mainthm}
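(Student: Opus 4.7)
I would close the cycle $(3)\Rightarrow(2)\Rightarrow(3)$ and $(1)\Leftrightarrow(3)$. The implication $(3)\Rightarrow(2)$ is immediate since $\Omega$ is nonempty and open, hence contains a rational (thus computable) point to which the uniform algorithm specializes. For $(2)\Rightarrow(3)$, I plan to verify that $\Omega$ is lower computable and then invoke Theorem~\ref{thm:uni_comp}: given an upper Hausdorff approximation of the computable boundary, the predicate ``$\dist(\overline B,\partial\Omega)>0$'' is semi-decidable for rational closed balls $\overline B$; since $\Omega$ is the connected component of $\R^d\setminus\partial\Omega$ containing $x_0$, enumerating only those separated balls that can be linked to a fixed rational ball around $x_0$ by a finite chain of overlapping separated balls exhausts $\Omega$ from within.

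For $(1)\Rightarrow(3)$, my plan is to turn the computable regularity modulus into a uniform walk-on-spheres algorithm. Given an oracle for $x\in\Omega$ and a target accuracy $2^{-n}$ for a computable $1$-Lipschitz $f\colon\partial\Omega\to[-1,1]$, I would maintain a finite rational atomic measure $\mu_k$ on $\Omega$ obeying the balayage identity $\int g\,d\omega_x^\Omega=\iint g\,d\omega_z^\Omega\,d\mu_k(z)$ for every continuous $g$. Atoms $z$ with $\dist(z,\partial\Omega)<\varepsilon(m)$ are ``frozen''; the others are replaced by a rational discretization of uniform surface measure on $\partial B(z,r_z)$ with a rational $r_z<\dist(z,\partial\Omega)$ chosen so that the quadrature error introduced at round $k$ is at most $2^{-n-k-2}$. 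By computable regularity, together with a computable near-point $\pi(z)\in\partial\Omega$ extracted from the computable boundary, each frozen atom contributes $\abs{\int f\,d\omega_z^\Omega-f(\pi(z))}=O(2^{-m})$. Standard walk-on-spheres decay for bounded domains, made quantitative by the availability of $\varepsilon$ and the bulk Harnack inequality, drives the unfrozen mass geometrically to zero, so finitely many rounds with $m$ large in terms of $n$ yield an approximation within $2^{-n}$. Equivalently, this exhibits $\Omega$ as admitting a uniform computable harmonic approximation in the sense of Theorem~\ref{lem:harm_measure_iff_approx}, whose uniform version then delivers the conclusion.

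For $(3)\Rightarrow(1)$, I would construct $\varepsilon(n)$ from uniformly computable $\omega_\cdot^\Omega$ and the computable boundary. Consider the uniformly computable tent function $\phi_n(x,y):=\min\bigl(1,\max(0,2^{n+1}\|y-x\|-1)\bigr)$, which vanishes on $B(x,2^{-n-1})$ and equals $1$ off $B(x,2^{-n})$, and set $v_n(x):=\int\phi_n(x,\cdot)\,d\omega_x^\Omega$. Then $v_n$ is uniformly computable in $x$ and dominates $1-\omega_x^\Omega(B(x,2^{-n}))$; regularity plus weak continuity of $x\mapsto\omega_x^\Omega$ extends $v_n$ continuously by $0$ to $\partial\Omega$, so by compactness there exists $k$ with $v_n\leq 2^{-n-1}$ on the boundary layer $\{x\in\Omega:\dist(x,\partial\Omega)\leq 2^{-k}\}$. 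The algorithm searches $k=1,2,\dots$: for each candidate the computable boundary produces a finite cover of the layer by rational interior balls, and I verify $v_n$ at their centers with sufficient margin to conclude across the whole layer, outputting $\varepsilon(n):=2^{-k}$ at the first success. The main obstacle is precisely this verification step: converting the abstract existence of a good $k$ into a provably halting procedure requires a quantitative uniform modulus of continuity for $v_n$ at the $2^{-k}$ scale, which I plan to derive self-calibratingly by combining the computable Harnack bound on the bulk of $\Omega$ (available via the computable boundary) with the boundary decay guaranteed by regularity.
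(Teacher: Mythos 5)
Your reductions $(3)\Rightarrow(2)$ and $(2)\Rightarrow(3)$ match the paper: a computable boundary makes $\Omega$ lower computable (your chain-of-overlapping-balls argument is the right way to see this), and Theorem~\ref{thm:uni_comp} does the rest. For $(1)\Rightarrow(3)$ you take a genuinely different route: the paper goes through Corollary~\ref{cor:compute_inner} and Lemma~\ref{lem:interior_to_harmonic} to produce a computable \emph{harmonic approximation} by dyadic polygons and then invokes Theorem~\ref{lem:harm_measure_iff_approx}, so that all actual computation of harmonic measure happens on polygons, where it is already known to be computable. Your direct walk-on-spheres scheme on $\Omega$ itself is plausible, but you would need to make explicit the quantitative decay of the unfrozen mass for an \emph{arbitrary} bounded domain (the $-\log\dist(\cdot,\partial\Omega)$ drift estimate underlying \cite{BB07}) and the interior gradient estimate controlling your quadrature error on $\partial B(z,r_z)$; neither of these is a Harnack statement. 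The polygon route buys you all of this for free.

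The genuine gap is in $(3)\Rightarrow(1)$, and it sits exactly at the step you flag. Your test function $\phi_n(x,y)$ is centered at the moving point $x$, so $v_n(x)=\int\phi_n(x,\cdot)\,d\omega_x^\Omega$ is not a harmonic function of $x$, and near $\partial\Omega$ you have no computable modulus of continuity for it: interior gradient estimates degenerate there, Harnack only controls ratios of positive harmonic functions in the bulk, and ``the boundary decay guaranteed by regularity'' is precisely the non-effective quantity whose rate you are trying to compute, so using it to certify the transfer from sampled centers to the whole layer is circular. The paper removes the circularity by anchoring the bumps at \emph{fixed} dyadic cube centers $x_Q$ adjacent to $\partial\Omega$: for each such $Q$ the function $u_{Q,n}(x)=\binr{\omega_x^\Omega}{\phi_{Q,n+3}}$ has a fixed integrand, hence is a single uniformly computable harmonic function of $x$, and a modulus of continuity $\varepsilon(n)$ valid on $\Omega$ can be read off from the algorithm computing it. One then searches for a rank $k$ at which $u_{Q,n}(x_{Q'})>1-2^{-n-3}$ for every finer boundary cube $Q'\subset Q$; the search halts by regularity and compactness, and the modulus transfers the bound from the centers $x_{Q'}$ to every $x$ in the layer, giving $\omega_x^\Omega(B(x,2^{-n-2}))\geq u_{Q,n}(x)>1-2^{-n-2}$. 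You should restructure your argument along these lines (fixed centers, harmonic $u_{Q,n}$) rather than trying to self-calibrate $v_n$.
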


Note that the condition of regularity is important here. For example, $\DD\setminus{0}$ is a domain with a computable boundary and a uniformly computable harmonic measure (which agrees with the harmonic measure of the unit disk). However, it is not regular, so it is not computably regular.

In the more useful class of lower-computable domains, there are regular domains with a uniformly computable harmonic measure which are not computably regular (see Lemma \ref{lem:non_comp_regular}). However, if we just assume the computable regularity \emph{a prioiri}, we get a complete geometric characterization of domains with a computable harmonic measure.

\begin{mainthm}\label{thm:lower_computable}
Let $\Omega$ be a computably regular domain. Then the following are equivalent:
\begin{enumerate}
\item $\Omega$ and $\partial\Omega$ are both lower computable.
\item $\Omega$ has a computable harmonic measure at $x_0$ for some computable $x_0\in\Omega$.
\item $\Omega$ has uniformly computable harmonic measures.
\end{enumerate}
\end{mainthm}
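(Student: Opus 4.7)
The proof will cycle $(3)\Rightarrow(2)\Rightarrow(1)\Rightarrow(3)$. The step $(3)\Rightarrow(2)$ is immediate once one exhibits a computable point of $\Omega$, which is straightforward given any rational ball known to lie in $\Omega$; and any of the other two hypotheses produces such a ball.

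For $(1)\Rightarrow(3)$, I would appeal to Theorem~\ref{thm:uni_comp}: it is enough to produce a single computable $x_0\in\Omega$ together with an algorithm computing $\omega_{x_0}^\Omega$ weakly. Lower computability of $\Omega$ supplies $x_0$ as the center of any enumerated rational ball contained in $\Omega$. To compute $\int f\,d\omega_{x_0}^\Omega$ for a computable $1$-Lipschitz $f$, I would run a walk-on-spheres scheme: at each stage, lower computability of $\Omega$ yields a rational ball $B(x,r)\subset\Omega$ with $r$ a definite fraction of $\dist(x,\partial\Omega)$, and we sample from the rotation-invariant surface measure on $\partial B(x,r)$ (which is the harmonic measure there). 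The walk terminates once $\dist(x,\partial\Omega)<\eps(n)$; at that point computable regularity guarantees $\omega_x^\Omega$ is $(1-2^{-n})$-concentrated in $B(x,2^{-n})$, and lower computability of $\partial\Omega$ lets us replace the exit position by a rational boundary approximant within $2^{-n}$. Averaging $f$ over enough simulated trajectories yields the integral to any prescribed tolerance.

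For $(2)\Rightarrow(1)$, the direction recovering the geometry from the measure, I first observe that computable regularity forces $\operatorname{supp}(\omega_{x_0}^\Omega)=\partial\Omega$: for $p\in\partial\Omega$ and any small rational $r>0$, moving $y\in\Omega$ close to $p$ concentrates $\omega_y^\Omega$ in $B(p,r)$, and the Harnack propagation underlying Theorem~\ref{thm:allcompute} transfers positivity to $\omega_{x_0}^\Omega$. Since $\omega_{x_0}^\Omega$ is computable, the predicate ``$\omega_{x_0}^\Omega(B)>0$'' is semidecidable on rational balls; by the support identification this enumerates exactly the rational balls meeting $\partial\Omega$, giving lower computability of $\partial\Omega$. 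For lower computability of $\Omega$ I would semidecide ``$B\subset\Omega$'' by searching for a finite chain of overlapping rational balls $B_0\ni x_0,\,B_1,\ldots,B_k=B$ in which each $\overline{B_i}$ admits a certificate that a small rational fattening $\widetilde B_i$ satisfies $\omega_{x_0}^\Omega(\widetilde B_i)<\delta$ for a sufficiently small rational $\delta$; computable regularity supplies a uniform positive lower bound (as a function of radius) on the harmonic measure of any ball meeting $\partial\Omega$, so the certificate forces $\widetilde B_i\cap\partial\Omega=\emptyset$ outright, and the chain from $x_0$ verifies $B\subset\Omega$.

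The principal obstacle is this last step: converting the a~priori $\Pi_1^0$ condition $\omega_{x_0}^\Omega(\overline B)=0$ into a $\Sigma_1^0$ certificate of $B\subset\Omega$. Without a quantitative gap between boundary-touching and interior balls this would fail, as the absence of computable regularity in Lemma~\ref{lem:non_comp_regular} already illustrates. The modulus $\eps(n)$ provides exactly such a gap by bounding from below, in a computable way, the harmonic measure of any small ball centered at a boundary point; this is what makes the above semidecision procedure terminate on every ball genuinely inside $\Omega$.
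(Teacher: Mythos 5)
There is a genuine gap in your $(2)\Rightarrow(1)$ step, precisely at the point you yourself flag as the principal obstacle. The soundness of your certificate ``$\omega_{x_0}^\Omega(\widetilde B_i)<\delta$ implies $\widetilde B_i\cap\partial\Omega=\emptyset$'' rests on the claim that computable regularity supplies a \emph{computable} positive lower bound $\delta(r)$ for $\omega_{x_0}^\Omega(B)$ over all balls $B$ of radius $r$ meeting $\partial\Omega$. Computable regularity only gives such a bound \emph{as seen from a point $y$ near the boundary}: if $\dist(y,\partial\Omega)<\eps(n)$ then $\omega_y^\Omega(B(y,2^{-n}))>1-2^{-n}$. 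To transfer this to $x_0$ you must apply Harnack along a chain from $x_0$ to $y$, and since $y$ lies within $\eps(n)$ of $\partial\Omega$, the relevant Harnack constant is governed by how fat a path in $\Omega$ connects $x_0$ to the boundary point in question --- geometric information that is exactly what you are trying to compute, and that is not controlled by computable regularity. (Lemma~\ref{uniform_harnack_bound} only bounds Harnack constants along chains staying a \emph{definite} distance from $\partial\Omega$, and the bound blows up as that distance shrinks.) A positive $\delta(r)$ does exist, by regularity, Harnack, and compactness of $\partial\Omega$, but nothing in the hypotheses makes it computable; so your algorithm cannot choose $\delta$, and soundness fails.

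The paper repairs exactly this by bootstrapping. It fixes the scale $2^{-k_n}$, computes the Harnack constant $C=C(k_n,d)$ of Lemma~\ref{uniform_harnack_bound} valid for chains of cubes at distance $\gtrsim 2^{-k_n}$ from $\partial\Omega$, calibrates the certificate threshold to $C^{-1}2^{-n-1}$, and then shows by an extremal (``closest bad point'') argument that the maximal connected polygon of certified cubes containing $x_0$ cannot come within $d2^{-k_n+1}$ of $\partial\Omega$: if $z$ were the first point where this fails, the Harnack chain from $x_0$ to $z$ would stay deep inside $\Omega$, so the certificate at $z$'s cube would force $\omega_z^\Omega(B(z,2^{-n}))<2^{-n}$, contradicting computable regularity. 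Harnack is thus only ever applied along already-certified cubes, where its constant is computable, and the global bound $\delta(r)$ is never needed. Your chain-of-balls set-up is close in spirit and could be repaired along these lines, but as written the key quantitative claim is unsupported. Two secondary remarks: your $(1)\Rightarrow(3)$ via walk-on-spheres is a different route from the paper's (which goes through computable interior approximations, Lemma~\ref{lem:interior_to_harmonic} and Theorem~\ref{lem:harm_measure_iff_approx}, reducing all stochastic simulation to dyadic polygons); to make it rigorous you would still need to replace ``sampling and averaging'' by a deterministic computation of the expectation and to bound computably the number of steps before the walk reaches distance $\eps(n)$ from $\partial\Omega$. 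Your boundary half of $(2)\Rightarrow(1)$ matches the paper's Lemma~\ref{lem:lower_computable_boundary} and is fine.
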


\section{Preliminaries}
\subsection{Approximations of domains}\label{approximations}

We start by briefly recalling the standard notions of computability for sets in $\RR^d$.

We will partition $\RR^d$ into dyadic cubes of the form $$\prod_{l=1}^d\left[k_l2^{-m},\, (k_l+1)2^{-m}\right)\text{, where }m,\,k_1,\dots k_d\in\ZZ.$$
For brevity, we will refer to a connected interior of a finite union of dyadic cubes of the same size $2^{-m}$ as  a \emph{dyadic polygon of rank $m$}. Note that every dyadic polygon of rank $m_0$ is also a dyadic polygon of rank $m$ for any $m\geq m_0$. We do not assume that the polygons are convex or even simply connected: polygonal holes inside are allowed. Thus a union of two dyadic polygons with intersecting interiors is again a dyadic polygon (with the rank equal to the maximum of two ranks of the polygons).

Recall that a sequence of dyadic polygons $\bset{P_n}$ is  uniformly computable if there exists a TM $\cM$ which, upon input $n\in\NN$, outputs the size $2^{-m}$ and the finite collection of $d$-tuples $(k_1,\dots k_d)$, which constitutes a finite description of $P_n$.

Observe that for any dyadic polygon $P$ its harmonic measure $\omega_x^P$ is uniformly computable. One of the ways to see it is to use the algorithm described in \cite{BB07} (see Proposition 5.6 of \cite{BBRY} for the details.)

Lower computability of an open set $\Omega\subset\RR^d$ (defined in Section \ref{stage}) is equivalent to the existence of a uniformly computable sequence of dyadic polygons $\bset{P_n}$  such that
 \[
 \Omega=\bigcup_n P_n.
 \]
 A closed set $E\subset \RR^d$ is \emph{upper computable} if its complement $\RR^d\setminus E$ is a lower computable open set; $E$ is \emph{lower computable} if there is a uniformly computable sequence of dyadic polygons
 $\bset{P_n}$ such that
 $E$ intersects the interior of a dyadic polygon $P$ if an only if $P=P_{n}$ for some $n\in\NN$.

 A closed set $E$ is called \emph{computable} if it is simultaneously lower and upper computable.
 \begin{prop}
   A compact set $K\Subset \RR^d$ is computable if and only if there exists a  uniformly computable sequence of finite sets $Q_n\subset \QQ^d$ such that the Hausdorff distance
   $$\dist_H(Q_n,K)<2^{-n}\,\,\text{ for all }n.$$
 \end{prop}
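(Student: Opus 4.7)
The plan is to prove the two directions separately, translating between the semi-decidable geometric data coming from lower and upper computability on one side, and quantitative Hausdorff closeness of rational point sets on the other.

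For the implication from Hausdorff approximability to computability, suppose $\{Q_n\}$ is uniformly computable with $\dist_H(Q_n,K) < 2^{-n}$. To exhibit upper computability of $K$, I enumerate dyadic polygons $P$ with $\bar P \cap K = \emptyset$ by verifying, for each pair $(P,n)$, the decidable condition that $\dist(q,P) > 2^{-n}$ for every $q \in Q_n$. Via the inclusion $K \subset \bigcup_{q \in Q_n} \overline{B(q,2^{-n})}$, this forces $P \cap K = \emptyset$; conversely, if $\dist(\bar P, K) = \delta > 0$, the condition is fulfilled for all sufficiently large $n$. Such $P$'s form a basis of $\RR^d \setminus K$, giving lower computability of the complement. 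For lower computability of $K$ itself, I enumerate polygons $P$ with $P \cap K \neq \emptyset$ by searching for triples $(P,n,q)$ with $q \in Q_n$ and $\overline{B(q,2^{-n})} \subset P$: the inclusion is decidable, and its existence forces a point of $K$ to lie in $P$; conversely, any $x \in K \cap P$ sits inside a small open ball contained in $P$, which contains the $2^{-n}$-neighborhood of some $q \in Q_n$ for $n$ large enough.

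For the converse direction, assume $K$ is both upper and lower computable, with a rational bounding box $B \supset K$ given (a bound is part of the presentation of a computable compact set, and in the forward direction it is automatic from $Q_0$). Given $n$, set $\eps = 2^{-n-2}$ and fix a finite rational grid $G \subset B$ such that every point of $B$ lies within $\eps$ of some element of $G$. For each $q \in G$, run in parallel two semi-decisions: (a) $\dist(q,K) < 2\eps$, certified by locating a polygon in the lower enumeration of $K$ contained in $B(q,2\eps)$, and (b) $\dist(q,K) > \eps$, certified by locating a finite subcollection of the upper enumeration of $\RR^d \setminus K$ whose union covers $\overline{B(q,\eps)}$. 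Both checks are decidable for rational data, and since $[0,\infty) = [0,2\eps) \cup (\eps,\infty)$, at least one test always terminates; place $q$ into $Q_n$ precisely when (a) wins the race. Then every $q \in Q_n$ satisfies $\dist(q,K) < 2\eps < 2^{-n}$, and for every $x \in K$ its nearest grid point $q_x$ has $\dist(q_x,K) \leq |q_x - x| < \eps$, so (b) cannot terminate at $q_x$, forcing (a) to succeed and placing $q_x \in Q_n$ with $|q_x - x| < 2^{-n}$. Consequently $\dist_H(Q_n,K) < 2^{-n}$.

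The principal technical obstacle is managing the overlap region $\eps < \dist(q,K) < 2\eps$ in the reverse direction, where both semi-decisions may terminate and the race timing determines the outcome; the algorithm remains correct because for grid points in this range either outcome is compatible with the desired Hausdorff bound, since every $x \in K$ is already served by a strictly interior grid point. The only external input used is a bounding box for $K$, which is either part of the data for the compact set or, in the forward direction, immediately recovered from the finite set $Q_0$.
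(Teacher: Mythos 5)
Your proof is correct and takes essentially the same approach as the paper's: both directions translate between the lower/upper enumerations and rational $2^{-n}$-nets using the same certificates (the lower enumeration to witness proximity to $K$, the upper enumeration to witness separation from $K$). The only organizational difference is in the direction from computability to Hausdorff approximability, where you run a dovetailed race at each point of a fixed grid, while the paper uses a single global stopping rule comparing an inner and an outer polyhedral approximant in Hausdorff distance; the two implementations are interchangeable.
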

 \begin{proof}
   Suppose $K\Subset\RR^d$ is simultaneously lower and upper computable. Let $C\subset \RR^d$ be a large enough closed rational cube
   compactly containing $K$.
      Consider two TMs running in parallel:
\begin{itemize}
\item $\cM_1$ computing a sequence of rational polygons $\bset{P_n}$ which exhaust the open set $\RR^d\setminus K$;
  \item $\cM_2$ computing a sequence of all rational cubes $C_n$ of size $2^{-(n+1)}$ which intersect $K$.
 \end{itemize}
  When the Hausdorff distance between
  $$S^j_1=\text{cl} \left(C\setminus\bigcup_{m\leq j}P_m\right)\text{ and }S^j_2=\bigcup_{m\leq j} C_m$$ is less than $2^{-(n+1)}$, the union of centers of the cubes
  $C_m$, $m\leq j$ forms the desired set $Q_n$.

  To prove the converse, we use the TM uniformly computing sets $Q_n$ to enumerate all dyadic polygons $P$ such that there exists
  $n\in\NN$ and $y\in Q_n$ for which the cube $C(y,2^{-(n-1)})$ with side $2^{-(n-1)}$ and center $y$ lies in $P$. This enumerates all polygons which intersect $K$, thus proving that it is lower computable. For upper computability, we use the TM computing the sets $Q_n$ to output
  the polygons $$P_n=\RR^d\setminus \bigcup_{y\in Q_n}\text{int}(C(y,2^{-(n-1)})),$$
  which exhaust the complement of $K$.
 \end{proof}

Let $\Omega$ be a domain in $\RR^d$ and $x\in\Omega$.

Our next definition is a quantitative version of approximating $\Omega$ by a sequence of polygons. Namely,
we say that a sequence of polygons $\bset{P_n}$ is an {\it interior approximation} of $(\Omega,x)$ if
\begin{enumerate}
\item $x\in P_n\Subset\Omega$ for all $n$.
\item For every $y\in\partial P_n$ we have $\dist(y,\partial\Omega)<2^{-n}$.
\end{enumerate}
We say $(\Omega,x)$ has a {\it computable interior approximation} if there is a uniformly computable sequence of dyadic polygons $\bset{P_n}$ which is an interior approximation of $(\Omega,x)$.

\begin{rmk}
  \label{rem-approx}
Note that if $\bset{P_n}$ is an interior approximation of $(\Omega,x_0)$, then $\bset{P_n}_{n\geq n_0}$ is also an interior approximation for $(\Omega,x)$ for every $x\in \Omega$, where $n_0=n_0(x)$ is large enough.
Also, evidently, if there exists a computable interior approximation of $(\Omega,x)$ for some $x\in\Omega$, then $\Omega$ is lower computable.
\end{rmk}

In this paper, it will be convenient for us to work with $C^2$-smooth subharmonic functions, i.e. $C^2$-smooth functions with non-negative Laplacian.

The following lemma shows that in order to conclude that the harmonic measure is computable, it is enough to be able to integrate every ``good'' subharmonic function with respect to the harmonic measure.
\begin{lem}\label{lem:computable_decomposition}
Let $\mu$ be a measure supported inside $[0,1]^d$.
Then $\mu$ is a computable measure if and only if
for every sequence $\{f_{j}:\RR^d\to\RR\}_{j\in\NN}$ of uniformly computable positive $C^2$-smooth subharmonic functions in $[0,1]^d$, bounded by $1$ and $1$-Lipshitz, there exists a TM $\cM$ which on input $(j,n)\in\NN^2$ outputs a rational $I_{j,n}$ satisfying
$$
|I_{j,n} - \binr\mu{f_{j}} | < 2^{-n}.
$$
We use the notation
$$
\binr \mu f:=\int f(x)\;d\mu(x).
$$
\end{lem}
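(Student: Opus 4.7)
The ``only if'' direction is immediate: if $\mu$ is computable, then by the equivalent characterization using $1$-Lipschitz test functions noted in Section \ref{stage}, the integrals $\binr\mu{f_j}$ are uniformly computable in $(j,n)$ for any uniformly computable sequence of $1$-Lipschitz functions, and the hypothesis here describes a strictly smaller class.

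For the converse, the plan is to reduce the computation of $\binr\mu{g}$ for an arbitrary uniformly computable $1$-Lipschitz $g:[0,1]^d\to[-1,1]$ to integration against functions in the restricted class. Fix a computable $C^\infty$ bump $\phi$ on $\R^d$ with $\int\phi=1$ and support in the unit ball, and let $\phi_\eps(x):=\eps^{-d}\phi(x/\eps)$. After extending $g$ to $\R^d$ by a $1$-Lipschitz extension (e.g.\ McShane's formula), set $g_\eps:=g*\phi_\eps$. Then $g_\eps$ is $C^\infty$, still $1$-Lipschitz and bounded by $1$, satisfies $\|g-g_\eps\|_\infty\leq\eps$ on $[0,1]^d$, and integration by parts yields $|\Delta g_\eps|\leq K_\phi/\eps^2$ for an explicit computable constant $K_\phi$.

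Next, decompose $g_\eps = G - H$ as a difference of positive subharmonic functions by setting
$$
G(x) := g_\eps(x) + C_\eps\|x\|^2 + 2,\qquad H(x) := C_\eps\|x\|^2 + 2,
$$
with $C_\eps := K_\phi/(2d\eps^2)$, so that $\Delta G\geq 0$ on $[0,1]^d$ while the constant term makes both functions strictly positive there. A direct estimate shows that $\|G\|_\infty,\|H\|_\infty$ and the Lipschitz constants of $G,H$ are all bounded by some explicit computable $N_\eps$. The renormalized pair $\tilde G := G/N_\eps$ and $\tilde H := H/N_\eps$ then lies in the restricted test-function class, and together with the varying parameter $\eps$ and input index $j$ forms a uniformly computable sequence. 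By hypothesis, $\binr\mu{\tilde G}$ and $\binr\mu{\tilde H}$ are uniformly computable, whence so is $\binr\mu{g_\eps} = N_\eps\bigl(\binr\mu{\tilde G} - \binr\mu{\tilde H}\bigr)$.

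Finally, $|\binr\mu{g} - \binr\mu{g_\eps}|\leq \eps\cdot\mu([0,1]^d)$, and the total mass $\mu([0,1]^d)$ is itself computable by applying the hypothesis to the constant test function $\tfrac{1}{2}$, which trivially lies in the restricted class. Thus given a target precision $2^{-n}$ we can effectively choose $\eps=\eps(n)$ so that the total error falls below $2^{-n}$, uniformly in $j$. The main bookkeeping obstacle is verifying that $K_\phi$, $C_\eps$, and $N_\eps$ are all explicitly computable from $\eps$, so that the precision can be prescribed in advance; this is a routine computation using any fixed smooth computable mollifier and does not affect the structure of the reduction.
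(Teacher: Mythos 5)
Your argument is correct, and it follows the same overall skeleton as the paper's proof --- both treat the ``only if'' direction as immediate, mollify the $1$-Lipschitz test function, split the mollification into a difference of two positive smooth subharmonic functions, renormalize into the restricted class, and recombine --- but the decomposition itself is genuinely different. The paper splits the Laplacian $D=\Delta(g*\varphi_\eps)$ into its positive and negative parts and takes the Green potentials of each over $[-2,2]^d$; this requires computing $D$ as a function (citing Zhong's result on derivatives of computable functions), knowing the Green's function of the cube explicitly, and invoking Green's identities both to recover $f=u-v$ and to verify subharmonicity of the two pieces. Your decomposition $g_\eps=(g_\eps+C_\eps\|x\|^2+2)-(C_\eps\|x\|^2+2)$ sidesteps all of that: you never need to \emph{compute} $\Delta g_\eps$, only to bound it a priori by $K_\phi/\eps^2$ via Young's inequality, and subharmonicity of both pieces is immediate from $2dC_\eps\geq K_\phi/\eps^2$. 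This is more elementary and self-contained; the cost is essentially the same in both approaches, namely that the normalizing constants ($N_\eps$ for you, $S_j^n$ in the paper) blow up like the inverse square of the mollification scale, so the precision requested from the hypothesis must be amplified accordingly --- which you correctly flag as computable bookkeeping. Two minor points: the hypothesis quantifies over sequences indexed by a single integer, so the doubly-indexed family $(\tilde G_{j,\eps},\tilde H_{j,\eps})$ with $\eps=2^{-m}$ should be re-enumerated by one index before being invoked (harmless); and your handling of the total mass via the constant test function $\tfrac12$ is actually slightly more careful than the paper's, which implicitly takes $\mu$ to be a probability measure when converting the sup-norm bound $\|f_j^n-g_j\|_\infty\leq 2^{-n}$ into an error bound on the integrals.
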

\begin{proof}
It is enough to only prove the ``if'' direction.

Let  $\bset{g_j}$ be a uniformly computable sequence of $1$-Lipschitz functions in $[0,1]^d$.  Let $\varphi(x)$ be a positive computable $C^\infty$-smooth function supported inside  $B(0,1)$ with $\int\phi(x)=1$. By convolution with the functions
$\varphi_n(x):=2^{n}\varphi(2^nx)$ we create a uniformly computable sequence of $C^\infty$-smooth functions
$$f_j^n:=g_j*\varphi_n,$$
such that
\begin{equation}\label{eq:supbound}
\|f_j^n-g_j\|_\infty\leq 2^{-n}.
\end{equation}
All of these functions are compactly supported inside a larger cube $(-2,2)^d$.

 Let $D_j^n$ denote the Laplacian of $f_j^n$. Following \cite{Zhong1998} we see that $D_j^n$ are uniformly computable functions. Let $g$ denote Green's function for the cube $[-2,2]^d$. This function is explicitly computable. Define two sequences by
\begin{eqnarray*}
u_j^n(x)&:=&\integrate {[-2,2]^d}{}{D_{j+}^n(y)\cdot g(x,y)}m(y),\;\\v_j^n(x)&:=&\integrate {[-2,2]^d}{}{D_{j-}^n(y)\cdot g(x,y)}m(y),
\end{eqnarray*}
where
$
D_{j+}^n(x)=\max\bset{D_j^n(x),0}\;, D_{j-}^n(x):=\max\bset{-D_j^n(x),0}
$. Thus $D_j^n(x)=D_{j+}^n(x)-D_{j-}^n(x)$.

We first note that $u_j^n$ and $v_j^n$ are uniformly computable as integrals of uniformly computable functions over computable sets with respect to Lebegue's measure, which is computable. By Green's identities,
\begin{equation}\label{eq:Green_decompose}
f_j^n(x)=\integrate {[-2,2]^d}{}{f_j^n(y)\cdot\Delta g(x,y)}m(y)=u_j^n(x)-v_j^n(x)
\end{equation}
since $f_j^n$ are compactly supported in $(-2,2)^d$.

Also by Green identities, the Laplacians of $u_j^n$ and $v_j^n$ are equal to $D_{j+}^n$ and $D_{j-}^n$ respectively. Thus they are subharmonic and $C^2$-smooth. Since they are uniformly computable, there are uniformly computable constants $S_j^n$, such that
$$\widetilde{u}_j^n:=u_j^n/S_j^n,\quad \widetilde{v}_j^n:=v_j^n/S_j^n$$ are $1$-Lipshitz and bounded by $1$.

Using the algorithm for computing the sequence $\{g_j\}$, we can uniformly compute the sequences of the subharmonic functions $\{\widetilde{u}_j^n\}$ and $\{\widetilde{v}_j^n\}$. Using the hypothesis of the Lemma, we can uniformly compute $2^{-n}-$approximations of
$$\int f_j^n\,d\mu=S_j^n\left(\int \widetilde{u}_j^n\,d\mu-\int \widetilde{v}_j^n\,d\mu\right).$$
By \eqref{eq:supbound}, these are $2^{-n+1}-$approximations of $\int g_j\,d\mu$.
\end{proof}

\begin{rmk}\label{rmk:decompose}
Equation \eqref{eq:Green_decompose} also implies the well-known fact that any (not necessarily computable) $C^2$-smooth function can be represented as a difference of two positive smooth subharmonic functions.
\end{rmk}

We will make extensive use of the classical notion of \emph{harmonic correction} (see \cite{Subharmonic}):
\begin{defn}\label{def:harm_cor}
Let $u$ be a function in $[0,1]^d$ and let $\Omega\subset [0,1]^d$. Define the harmonic correction of $u$ in $\Omega$ as
$$u^{\Omega}(x):=
\begin{cases}
u(x), & x\notin\Omega\\
\binr{\omega_x^{\Omega}}u& x\in\Omega
\end{cases},$$
\end{defn}
Let us list the standard properties of harmonic corrections of subharmonic function, see \cite{Subharmonic} for the details.
\begin{prop}\label{prop:subharmonic}
Let $u$ be subharmonic in $[0,1]^d$ and $\Omega\subset [0,1]^d$ be a connected domain. Then the harmonic correction $u^{\Omega}$ satisfies the following properties:
\begin{enumerate}
\item If $\Omega$ is regular, then $u^\Omega$ is harmonic on $\Omega$ and subharmonic on $[0,1]^d$.
\item If $\Omega'\subset\Omega$, then $u(x)\leq u^{\Omega'}(x)\leq u^{\Omega}(x)$.
\item If $v$ is harmonic in $\Omega$, and $v(x)\geq u(x)$ for $x\in\Omega$, then $v(x)\geq u^{\Omega}(x)$ for $x\in\Omega$.
\end{enumerate}
\end{prop}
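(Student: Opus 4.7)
The plan is to prove the three properties in a natural order: first the harmonicity clause of (1), then (3), then use them to derive (2) and the subharmonicity clause of (1). The whole argument rests on classical potential theory, specifically on the defining property of the harmonic measure as the representing measure for the Dirichlet solution, combined with the maximum principle.

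First I would address harmonicity in part (1). Since $\Omega$ is regular and $u$ is upper semicontinuous and bounded on $\partial\Omega$, the map $x \mapsto \binr{\omega_x^\Omega}{u}$ is harmonic on $\Omega$ by the standard characterisation of harmonic measure as the representing measure for the Dirichlet solution. I would then prove (3) by a maximum principle argument applied to $v - u^\Omega$: this function is harmonic on $\Omega$, and at any $\xi\in \partial \Omega$ the regularity of $\Omega$ lets us identify the boundary limits of $u^\Omega$ with $u(\xi)$ in the appropriate semicontinuous sense, so that, using $v\geq u$ on $\Omega$, the lower limit of $v-u^\Omega$ at $\xi$ is non-negative; the minimum principle then gives $v\geq u^\Omega$. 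The same technique, applied to the subharmonic function $u - u^\Omega$ on $\Omega$ with non-positive boundary limits, yields the key corollary $u \leq u^\Omega$ everywhere on $[0,1]^d$.

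Equipped with (3) and the corollary $u \leq u^\Omega$, property (2) decomposes into two pieces. The inequality $u \leq u^{\Omega'}$ is exactly the same corollary applied with $\Omega$ replaced by $\Omega'$. For the second, note that $u^\Omega|_{\Omega'}$ is harmonic on $\Omega'$ (by the harmonicity clause of (1) applied to $\Omega$) and satisfies $u^\Omega \geq u$ there, so (3) applied on $\Omega'$ with $v:=u^\Omega|_{\Omega'}$ yields $u^{\Omega'} \leq u^\Omega$ on $\Omega'$; outside $\Omega'$ we have $u^{\Omega'} = u$, and $u \leq u^\Omega$ again by the corollary. Finally, for subharmonicity of $u^\Omega$ on $[0,1]^d$, the submean inequality is clear at points of $\Omega$ (where $u^\Omega$ is harmonic) and at points outside $\overline \Omega$ (where $u^\Omega = u$ is subharmonic by hypothesis), while at $\xi \in \partial \Omega$ and small $r$ one has
$$u^\Omega(\xi) = u(\xi) \leq \frac{1}{|\partial B(\xi,r)|}\int_{\partial B(\xi,r)} u\,d\sigma \leq \frac{1}{|\partial B(\xi,r)|}\int_{\partial B(\xi,r)} u^\Omega\,d\sigma,$$
using subharmonicity of $u$ followed by the pointwise bound $u \leq u^\Omega$; combined with upper semicontinuity of $u^\Omega$ at the boundary (a consequence of regularity), this gives the required submean property at $\xi$.

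The main obstacle is the careful treatment of the boundary behaviour of $u^\Omega$ at $\partial \Omega$: since $u$ is only upper semicontinuous, one has to invoke the Perron-type interpretation of $u^\Omega$ and the fact that for regular $\Omega$ the boundary limits of $u^\Omega$ from inside match $u|_{\partial\Omega}$ in the appropriate semicontinuous sense. This is a standard point in potential theory, but it is the step where all the hypotheses (subharmonicity of $u$, regularity of $\Omega$) are actually used; once it is invoked, everything else is routine maximum-principle bookkeeping.
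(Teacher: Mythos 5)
The paper offers no proof of this proposition: it is stated as a list of standard facts with a pointer to Hayman--Kennedy, so there is no in-paper argument to compare against, and any self-contained proof is welcome. Yours is the standard textbook argument and is correct precisely when $\Omega$ is regular and $u$ is continuous. The genuine gap is that parts (2) and (3) carry no regularity hypothesis --- and the paper really does apply them to arbitrary bounded connected domains, e.g.\ in Proposition \ref{lem:approx_is_approx} --- yet your proofs of (3), of the corollary $u\le u^\Omega$, and hence of (2) (which additionally borrows the harmonicity clause of (1), proved only under regularity), all pass through the boundary behaviour of $u^\Omega$ at $\partial\Omega$, which is exactly what is unavailable at irregular boundary points. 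Concretely, for $\Omega=\DD\setminus\{0\}$ and $u(x)=|x|^2$ one has $u^\Omega\equiv 1$ on $\Omega$ while $u(0)=0$, so $\lim_{x\to 0}u^\Omega(x)=1>u(0)$: the inequality $\liminf_{x\to\xi}\bigl(v(x)-u^\Omega(x)\bigr)\ge 0$ that you feed into the minimum principle fails at $\xi=0$, and your argument only yields $v\ge u^\Omega-1$ there. (The statement itself survives because the irregular boundary points form a polar set.)

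The repair is standard but must be said. Either run the comparison through the Perron upper and lower classes for the boundary data $u|_{\partial\Omega}$: the restriction $u|_\Omega$ lies in the lower class by upper semicontinuity of $u$ on the whole cube, giving $u\le u^\Omega$ on $\Omega$ with no regularity needed; and for continuous $u$ the hypothesis $v\ge u$ on $\Omega$ gives $\liminf_{x\to\xi}v(x)\ge u(\xi)$, placing $v$ in the upper class and yielding (3). Alternatively, invoke the extended minimum principle, which requires the boundary inequality only outside a polar set. Note also that harmonicity of $x\mapsto\binr{\omega_x^\Omega}{u}$ on $\Omega$ holds for every domain, not only regular ones (regularity governs attainment of boundary values, not interior harmonicity), which legitimizes its use inside (2) and (3). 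A secondary caveat: for merely upper semicontinuous subharmonic $u$ --- the proposition's stated generality --- the step $\liminf_{x\to\xi}v(x)\ge u(\xi)$ needs lower semicontinuity of $u$ from inside $\Omega$ and can fail; this is harmless for the paper, which only applies the result to $C^2$-smooth subharmonic functions. Your derivation of subharmonicity of $u^\Omega$ across $\partial\Omega$ from the pointwise bound $u\le u^\Omega$ is correct as written.
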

The next proposition shows that any interior approximation can be used to approximate the harmonic measure in the weak sense. Its proof is the same as Wiener's solution for Dirichlet's problem. We include it for completeness.
\begin{prop}\label{lem:approx_is_approx}
Let $\Omega$ be a bounded connected domain and let $\bset{P_n}$ be an  interior approximation of $\Omega$. Then for every $x\in \Omega$ and every $f$ continuous in a neighbourhood of $\partial\Omega$
\begin{equation}\label{eq:convergence}
\binr{\omega_x^n}f\underset{n\rightarrow\infty}\longrightarrow \binr{\omega_x}f,
\end{equation}
where $\omega_x:=\omega_x^\Omega, \; \omega_x^n:=\omega_x^{P_n}$.
\end{prop}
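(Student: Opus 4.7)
The plan is to reduce the statement to integrating a smooth subharmonic test function against the harmonic measures and then to exploit the harmonic correction machinery of Definition~\ref{def:harm_cor} and Proposition~\ref{prop:subharmonic}. First I would extend $f$ continuously to a bounded function $\tilde f$ on $\RR^d$. Since $\omega_x^n$ is supported on $\partial P_n$, which lies in any prescribed neighbourhood of $\partial\Omega$ once $n$ is large, the extension chosen inside $\Omega$ does not affect either side of~\eqref{eq:convergence}. Mollifying $\tilde f$ with a smooth bump gives a $C^\infty$ approximation uniformly close in sup-norm, and since $\binr{\omega_x^n}{\cdot}$ and $\binr{\omega_x}{\cdot}$ integrate against probability measures, uniform approximation reduces the claim to smooth $\tilde f$. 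By Remark~\ref{rmk:decompose}, such a $\tilde f$ splits as a difference of two positive $C^\infty$ subharmonic functions on a cube $Q\supset\bar\Omega$, so by linearity I may further assume that $\tilde f$ is itself positive, $C^\infty$, and subharmonic on $Q$.

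For such $\tilde f$, Definition~\ref{def:harm_cor} identifies $\binr{\omega_x^n}{\tilde f}=\tilde f^{P_n}(x)$ and $\binr{\omega_x}{\tilde f}=\tilde f^{\Omega}(x)$. Each $P_n$ is a regular (dyadic) polygon contained in $\Omega$, so Proposition~\ref{prop:subharmonic}(2) applied with $\Omega'=P_n$ yields
\[
\tilde f(z)\;\le\;\tilde f^{P_n}(z)\;\le\;\tilde f^{\Omega}(z)\qquad\text{for all } z\in Q.
\]
Evaluating at $z=x$ already gives $\limsup_{n}\binr{\omega_x^n}{\tilde f}\le\binr{\omega_x}{\tilde f}$, one half of the desired limit.

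For the matching lower bound I would run a normal-families argument. By Remark~\ref{rem-approx} combined with a simple covering argument, every compact $K\Subset\Omega$ is contained in $P_n$ for all sufficiently large $n$. On $P_n$ the function $\tilde f^{P_n}$ is harmonic (Proposition~\ref{prop:subharmonic}(1)) and uniformly bounded by $\sup_Q\tilde f$, so the standard compactness of locally uniformly bounded families of harmonic functions extracts a subsequence $\tilde f^{P_{n_k}}$ converging locally uniformly on $\Omega$ to a harmonic function $v$. Passing to the limit in the displayed double inequality produces $\tilde f\le v\le \tilde f^{\Omega}$ on $\Omega$, and combining the left-hand inequality with the harmonicity of $v$ lets me invoke Proposition~\ref{prop:subharmonic}(3), which forces $v\ge \tilde f^{\Omega}$ on $\Omega$. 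Hence $v\equiv \tilde f^{\Omega}$ there; every subsequential limit of $\tilde f^{P_n}(x)$ equals $\tilde f^{\Omega}(x)$, so the full sequence converges and~\eqref{eq:convergence} follows.

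The main obstacle I anticipate is the potential irregularity of $\Omega$: because $\tilde f^{\Omega}$ may fail to assume the correct boundary values at irregular points of $\partial\Omega$, any attempt to bound $\tilde f-\tilde f^{\Omega}$ pointwise on $\partial P_n$ would break down near such points. The harmonic-correction framework sidesteps this difficulty by performing the comparison strictly inside $\Omega$: Proposition~\ref{prop:subharmonic}(3) pins down the subsequential limit through harmonicity and a pointwise majorisation on $\Omega$ alone, so boundary behaviour at irregular points never enters the argument.
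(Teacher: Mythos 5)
Your proof is correct and follows essentially the same route as the paper's: reduce to positive smooth subharmonic test functions via mollification and Remark~\ref{rmk:decompose}, identify the integrals with harmonic corrections, sandwich $f\le f^{P_n}\le f^{\Omega}$ using Proposition~\ref{prop:subharmonic}(2), and pin the harmonic limit down as $f^{\Omega}$ via Proposition~\ref{prop:subharmonic}(3). The only (harmless) divergence is that the paper first reduces to nested $P_n\subset P_{n+1}$ and takes an increasing limit of harmonic functions, whereas you extract locally uniformly convergent subsequences by normal families, which sidesteps that reduction.
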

\begin{proof}
By the density of smooth functions and Remark \ref{rmk:decompose}, it is enough to prove the result for smooth subharmonic functions $f$. By weak$^*$-compactness of probability measures, it is enough to assume that $P_n\subset P_{n+1}$.

Observe that for $x\in P_n$,
$$\binr{\omega_x^n}f=f^{P_n}(x),\quad \binr{\omega_x}f=f^{\Omega}(x).$$
Thus, by Proposition \ref{prop:subharmonic}
$$f(x)\leq\binr{\omega_x^n}f\leq\binr{\omega_x^{n+1}}f\leq\binr{\omega_x}f.$$
So
$$v(x):=\lim_{n\to\infty}\binr{\omega_x^n}f.$$
is a limit of increasing sequence of bounded above by a harmonic function $f^{\Omega}(x)$ functions which are harmonic in $P_n$. So $v$ itself is harmonic in $\Omega=\cup P_n$, and
$$f(x)\leq v(x)\leq f^{\Omega}(x)$$
By Proposition \ref{prop:subharmonic}(3):
$$\lim_{n\to\infty}\binr{\omega_x^n}f=v(x)=f^{\Omega}(x)=\binr{\omega_x}f.$$
\end{proof}

Note that the existence of computable interior approximations does not necessarily imply computability of harmonic measure, even relative to a single computable point in $\Omega$. As is  shown in \cite[\S 5.2]{BBRY}, it may be the case that the convergence in (\ref{eq:convergence}) above does not occur at a computable rate, so that the harmonic measure $\omega_x^\Omega$ is not computable.
In order to give a condition for  the computability of the harmonic measure relative to a point, we introduce the following notion. Let $x\in\Omega$ and $Q\subset\Omega$ be a dyadic polygon, possibly empty. We say that a sequence of polygons $\bset{P_n}$ is a {\it harmonic approximation} of $(\Omega,x,Q)$, if:
\begin{enumerate}
\item $x\in P_n$ and $Q\subset P_n$ for all $n$.
\item For every 1-Lipschitz $C^2$-smooth subharmonic function $f$ which is bounded by 1,
$$
\abs{\binr{\omega_{x}^{P_n}}{f}-\binr{\omega_{x}^\Omega}{f}}=\abs{f^{\Omega}(x)-f^{P_n}(x)}<2^{-n}.
$$
\item  $\omega_x^\Omega(\partial \Omega\cap \overline{P_n})<2^{-n}$.
\end{enumerate}
We say that $(\Omega,x, Q)$ has a {\it computable harmonic approximation} if there exists a uniformly computable sequence of dyadic polygons $\bset{P_n}$ which is a harmonic approximation of $(\Omega,x,Q)$.

We then have:
\begin{thm}
  \label{lem:harm_measure_iff_approx}
  Let $\Omega$ be a connected domain in $[0,1]^d$. The  harmonic measure of $\Omega$  is computable at $x$ if and only if $(\Omega,x,Q)$ has a computable harmonic approximation for any dyadic polygon $Q\subset\Omega$.
\end{thm}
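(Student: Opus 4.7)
The two directions call for different ideas.

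\emph{Reverse.} Given a computable harmonic approximation $\{P_n\}$ of $(\Omega,x,\emptyset)$, note that $\omega_x^{P_n}$ is uniformly computable since $P_n$ is a dyadic polygon (cf.\ the opening of \S\ref{approximations}). For any uniformly computable sequence of admissible test functions $f$ (positive, $C^2$-smooth, subharmonic, $1$-Lipschitz, $|f|\le 1$), the value $f^{P_n}(x)=\int f\,d\omega_x^{P_n}$ is directly computable; by condition (2) it lies within $2^{-n}$ of $f^\Omega(x)$. Lemma~\ref{lem:computable_decomposition} then yields computability of $\omega_x^\Omega$.

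\emph{Forward.} Assume $\omega_x^\Omega$ is computable at $x$. Given $Q$ and $n$, the plan is to dovetail over dyadic polygons $P\supset Q$, running three parallel semi-decision procedures:
\begin{itemize}
\item[(i)] $x\in P$: semi-decidable from the oracle for $x$ (with $Q\subset P$ a decidable syntactic check).
\item[(ii)] $\omega_x^\Omega(\partial\Omega\cap\overline{P})=\omega_x^\Omega(\overline{P})<2^{-n}$: semi-decidable by upper semi-computability of a computable measure on the rational closed set $\overline{P}$.
\item[(iii)] Condition (2): for every $1$-Lipschitz function $f$ on $[0,1]^d$, $|f^\Omega(x)-f^P(x)|\le W_1(\omega_x^\Omega,\omega_x^P)$ by Kantorovich--Rubinstein duality. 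The $1$-Wasserstein distance between two computable probability measures on the compact set $[0,1]^d$ is itself a computable real number (a standard fact: approximate each measure by atomic measures on dyadic grids, solve finite transportation LPs, and use weak $\Rightarrow W_1$ convergence on compacts), so $W_1<2^{-n}$ is semi-decidable, and this strict inequality implies (2).
\end{itemize}
The algorithm outputs the first $P$ on which all three procedures halt. Termination follows by exhibiting, for each $n$, a suitable $P$: take a dyadic polygon $P\Subset\Omega$ with $Q\cup\{x\}\subset P$ and $\partial P$ sufficiently close to $\partial\Omega$. Then $\partial\Omega\cap\overline{P}=\emptyset$ makes (3) hold trivially, and by Proposition~\ref{lem:approx_is_approx} applied to a nested exhaustion of $\Omega$ by interior dyadic polygons, $\omega_x^P\to\omega_x^\Omega$ weakly; on compact $[0,1]^d$ this implies $W_1(\omega_x^P,\omega_x^\Omega)\to 0$, so (2) holds for fine enough $P$.

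The main obstacle is the universal quantifier over admissible test functions in condition (2); the key maneuver is to replace this universal quantifier by the single computable quantity $W_1(\omega_x^\Omega,\omega_x^P)$, which provides a sufficient (if not sharp) computable certificate, sidestepping the need to check infinitely many test functions.
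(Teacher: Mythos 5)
Your proof is correct, and its reverse direction coincides with the paper's: uniform computability of $\omega_x^{P_n}$ for dyadic polygons, property (2) of the approximation, and Lemma~\ref{lem:computable_decomposition}. The forward direction has the same skeleton as the paper's --- a dovetailed search over dyadic polygons containing $x$ and $Q$, with conditions (1) and (3) semi-decided exactly as you describe and termination guaranteed by an interior exhaustion together with Proposition~\ref{lem:approx_is_approx} --- but you discharge the universal quantifier in condition (2) by a genuinely different device. The paper fixes, for each $n$, a finite uniformly computable family $\{f_1,\dots,f_{m_n}\}$ that is $2^{-n-2}$-dense in sup norm within the class of admissible test functions, checks the inequality on each $f_j$ to precision $2^{-n-1}$, and propagates the bound to all test functions by density; you instead certify condition (2) by the single semi-decidable inequality $W_1(\omega_x^\Omega,\omega_x^P)<2^{-n}$ via Kantorovich--Rubinstein. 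Your route yields a cleaner certificate (it covers all $1$-Lipschitz functions, not only the subharmonic smooth ones, and avoids the paper's unproved assertion that a computable finite net exists inside the class of subharmonic $C^2$-smooth $1$-Lipschitz functions, which takes a mollification argument to arrange); the price is importing the computability of the Wasserstein distance between computable measures on a compact set --- a standard but nontrivial fact of computable measure theory that, in the paper's framework, should be justified along the lines you sketch (lower bounds from the paper's remark that integration against computable $1$-Lipschitz functions suffices, upper bounds from grid discretization and finite transportation programs) and relativized to the oracle for $x$. With that fact granted, both arguments are complete and prove the same statement.
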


\begin{rmk}\label{rmk:empty_enough}
The proof actually shows that the computability of the  harmonic measure of $\Omega$ at $x$ is equivalent to the existence of computable harmonic approximation for an empty $Q$.
\end{rmk}

\begin{proof}
Assume that the harmonic measure is computable at $x$.
 Denote the collection of all dyadic polygons of rank $n$ containing $x$ and $Q$  as $\mathcal P_n$.  We define the partial order on $\mathcal P_n$ by $P\le P'$ if $P\subseteq P'$. Define $P_n=\bigcup_{P\Subset\Omega} P$. By maximality of $P_n$, for every $y\in\partial P_n$ we have $\dist(y,\partial\Omega)<d\cdot 2^{-n}$.

Although not necessarily computable, the sequence $\bset{P_n}$ is an interior approximation, and following Proposition \ref{lem:approx_is_approx} we know that for every $f$ continuous in a neighbourhood of $\partial\Omega$
\begin{equation}\label{eq:dens}
\binr{\omega_x^{P_n}}f\underset{n\rightarrow\infty}\longrightarrow \binr{\omega^\Omega_x}f.
\end{equation}
In order to compute the desired harmonic approximation of $\Omega$, we will search among all the polygons in $\mathcal{P}=\cup_n \mathcal{P}_n$. Let $R_1,R_2,\dots$ be an algorithmic enumeration of all the polygons in $\mathcal{P}$.  Fix $n$ and let $\bset{f_1,\cdots, f_{m_n}}$ be a set of computable subharmonic $C^2$-smooth 1-Lipschitz functions bounded by 1 so that for every subharmonic $C^2$-smooth 1-Lipschitz function $f$ bounded by 1 there exists $j$ so that
\begin{equation}\label{eq:dens_lip}
\underset{y\in[0,1]^d}\sup\;\abs{f(y)-f_j(y)}<2^{-n-2}.
\end{equation}
Note that one can choose the functions in the collection $\bset{f_\ell}_{\ell=1}^{m_n}$ to be computable uniformly in $n$. We can use a greedy search to find a polygon $Q_n=R_{k_n}$ such that
\begin{enumerate}
\item $x$ lies in the interior of $Q_n$, $Q\subset Q_n$.
\item For every $j=1,\dots ,m_n$
$$
\abs{\binr{\omega_{x}^{Q_n}}{f_j}-\binr{\omega^\Omega_{x}}{f_j}}<2^{-n-1}.
$$
\item $\omega_x^\Omega(\partial \Omega\cap \overline{Q_n})<2^{-n-1}$.
\end{enumerate}
Indeed, the first point holds by the definition of $\mathcal P_n$. The other two points can be tested since both harmonic measures are computable. The fact that these three points must hold for some $Q_n$, and thus the searching algorithm eventually halts, follows from \eqref{eq:dens}. In particular, the second point and \eqref{eq:dens_lip} imply that for every subharmonic smooth 1-Lipschitz function $f$ bounded by 1, we have
$
\abs{\binr{\omega_{x}^{Q_n}}{f}-\binr{\omega^\Omega_{x}}{f}}<2^{-n}.
$
We see that the collection $\bset{Q_n}$ is a computable harmonic  approximation at $x$.

Now assume there exists a computable harmonic approximation $\bset{P_n}$ for $(\Omega,x,\emptyset)$. Let  $\bset{g_j}$ be a uniformly
computable sequence of subharmonic $C^2$-smooth $1$-Lipschitz functions in $[0,1]^d$ bounded by $1$.
By property 2 of the approximation $\bset{P_n}$, we know that
$$
\abs{\binr{\omega_{x}}{g_j}-\binr{\omega_{x}^n}{g_j}}<2^{-n}.
$$

Let $\cM_\ell$ be the Turing Machine which realizes the computability of  $\omega_x^{P_{k_\ell}}$, and denote $I^\ell_{j,n}$ its sequence of outputs, which approximates $<\omega_x^{{\ell}},g_j>$ with precision $2^{-n}$. Then
the uniformly computable sequence $I^n_{j,n+1}$ approximates $<\omega_x,g_j>$ with precision $2^{-n}$, which means the harmonic measure $\omega_x$ is indeed computable.
\end{proof}

\subsection{Some auxiliary results}\label{aux}
We begin by stating:
\begin{lem}\label{lem:inner_computable_dist}
Let $\Omega\subset[0,1]^d\subset\R^d$ be a lower computable connected domain with a lower computable boundary. Then the function $d:\Omega\rightarrow\R$ defined by $d(x)=\dist(x,\partial\Omega)$ is computable.
\end{lem}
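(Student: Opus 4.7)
The plan is to show that $d(x)$ is both lower and upper semicomputable relative to $x$, uniformly in $x$, and then to combine the two semi-decisions to obtain rational approximations to any prescribed precision $2^{-n}$.

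\textbf{Upper bounds from }$\partial\Omega$. Lower computability of the closed set $\partial\Omega$ yields a uniformly computable enumeration $\{R_k\}$ of all dyadic polygons whose interior meets $\partial\Omega$. For each $R_k$ the quantity $\dist(x,R_k)+\operatorname{diam}(R_k)$ is computable from the oracle for $x$ and bounds $d(x)$ from above, since any $y\in \operatorname{int}(R_k)\cap\partial\Omega$ satisfies $|x-y|\le \dist(x,R_k)+\operatorname{diam}(R_k)$. Convergence follows by considering, for each rank $m$, the dyadic polygon $R$ consisting of the (at most $2^d$) rank-$m$ cubes whose closures contain the nearest boundary point $y^*$ to $x$; its interior contains $y^*$, so $R$ is in the enumeration and gives the upper bound $d(x)+2\sqrt d\cdot 2^{-m}$.

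\textbf{Lower bounds from }$\Omega$. Lower computability of $\Omega$ gives an exhaustion $\Omega=\bigcup_k P_k$ by a uniformly computable sequence of dyadic polygons. To certify $d(x)>r$ for a rational $r>0$, equivalently $\overline{B(x,r)}\subset\Omega$, I enumerate finite sub-unions $T_N=P_1\cup\dots\cup P_N$ together with oracle approximations $q$ to $x$ with $|q-x|<\epsilon$, and accept as soon as $\overline{B(q,r+\epsilon)}\subset T_N$. This inclusion is decidable because $T_N$ is a finite union of rational cubes, and whenever it holds we have $\overline{B(x,r)}\subset T_N\subset\Omega$. Conversely, if $r<d(x)$ then compactness of $\overline{B(x,r)}$ inside the open cover $\{P_k\}$ provides some $T_N$ containing $\overline{B(x,r)}$ with strictly positive margin, so the verification will succeed for every sufficiently good approximation $q$.

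\textbf{Combining.} I run both searches in parallel. At every stage I maintain the current best lower bound $L$ (the largest $r$ certified so far) and the current best upper bound $U$ (the infimum over enumerated $R_k$ of $\dist(x,R_k)+\operatorname{diam}(R_k)$). By the preceding two steps, $L\uparrow d(x)$ and $U\downarrow d(x)$; the algorithm halts when $U-L<2^{-n}$ and outputs $(U+L)/2$.

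\textbf{Main subtlety.} The only delicate point is that the minimizer $y^*\in\partial\Omega$ may lie on a dyadic face or corner, and hence in the interior of no single dyadic cube. Enumerating arbitrary dyadic polygons (not only single cubes) via the lower-computability hypothesis for $\partial\Omega$ sidesteps this issue, since a small polygon formed from the $\le 2^d$ rank-$m$ cubes surrounding $y^*$ always has $y^*$ in its interior and is therefore captured by the enumeration.
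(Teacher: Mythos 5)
Your proof is correct and follows essentially the same route the paper sketches (the paper's own ``proof'' is a two-line outline that leaves the details to the reader): lower bounds on $d(x)$ certified from the polygonal exhaustion of $\Omega$, upper bounds from the enumeration of dyadic polygons meeting $\partial\Omega$, combined by a parallel greedy search until the gap is below $2^{-n}$. Your closing remark about minimizers lying on dyadic faces, handled by enumerating polygons rather than single cubes, is a genuine detail the paper glosses over, and you resolve it correctly.
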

Note that we may not be able to compute the distance function for points outside of $\Omega$.
The proof of the lemma is straightforward: since we can computably exhaust the interior of $\Omega$ with rational polygons, and computably enumerate all rational polygons which intersect $\partial\Omega$, for each $x\in\Omega$ we are able, by a greedy search, to find the nearest ball of radius $2^{-n}$ which intersects $\partial\Omega$. The details are left to the reader.

\begin{cor}\label{cor:compute_inner}
Let $\Omega\subset[0,1]^d\subset\R^d$ be a lower computable connected domain with a lower computable boundary and $x\in\Omega$ be a computable point. Then $(\Omega, x)$ has a computable interior approximation.
\end{cor}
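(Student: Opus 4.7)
The plan is to apply Lemma~\ref{lem:inner_computable_dist} to obtain a computable distance function $d(y)=\dist(y,\partial\Omega)$ on $\Omega$, and then greedily search for a dyadic polygon meeting the two requirements of an interior approximation at level $n$. Given $n$, I would enumerate dyadic polygons $P$ with $x\in\mathrm{int}(P)$, restricted to ranks $m$ large enough that every face of $\partial P$ has diameter less than $2^{-n-2}$, and output the first candidate that passes two parallel semi-decidable checks.

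The first check is $\overline P\Subset\Omega$: since $\Omega$ is a union of uniformly computable rational balls, the compact set $\overline P$ is eventually covered by finitely many of them, which is semi-decidable. Once this succeeds, every vertex $v$ of $\partial P$ (finitely many, dyadic-rational) lies in $\Omega$, so the algorithm from Lemma~\ref{lem:inner_computable_dist} converges on each vertex and makes the strict test $d(v)<2^{-n-1}$ semi-decidable from above. The first certification yields condition~(1) of an interior approximation, while the 1-Lipschitz property of $d$ combined with the face-diameter bound converts the vertex inequality into $d(y)<2^{-n-1}+2^{-n-2}<2^{-n}$ for every $y\in\partial P$, giving condition~(2).

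For termination, I would exhibit a candidate that must eventually pass: once $n$ is large enough that $d(x)>2^{-n-1}$, the connected component $U$ of $x$ in the open set $\{y\in\Omega:d(y)>2^{-n-1}\}$ is nonempty and compactly contained in $\Omega$, and filling $U$ from inside by dyadic cubes of a sufficiently high rank produces a polygon whose boundary lies in $\{d\le 2^{-n-1}\}$ and hence passes both checks. The main obstacle is organizational rather than mathematical: since neither check produces a negative answer in finite time on a bad candidate, one must dovetail the two semi-decidable verifications across the enumeration of candidates so that the first $P$ passing both is returned as $P_n$; the role of the 1-Lipschitz step, which upgrades the finite vertex test into a uniform bound on $\partial P_n$, is what makes the whole search rest on only finitely much data per candidate.
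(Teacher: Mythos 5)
Your overall architecture is sound and rests on the same key ingredient as the paper's proof, namely Lemma~\ref{lem:inner_computable_dist}: certify candidate polygons by a finite vertex test on the computable distance function, upgrade it to a bound on all of $\partial P$ by $1$-Lipschitzness, and dovetail the two semi-decidable checks. The paper organizes this differently and more economically: it builds an explicit increasing \emph{connected} union $R_\ell$ of the polygons $P_k$ witnessing lower computability of $\Omega$ (adjoining at each step the first $P_k$ that meets the current union), so that $x\in R_\ell\subset\Omega$ holds by construction, $\sup_{y\in\partial R_\ell}\dist(y,\partial\Omega)\to 0$ follows from compactness of $\partial\Omega$, and the distance function is only needed to select a computable subsequence; termination then comes for free. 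In your search-and-certify version, termination is the one thing that must be argued separately, and that is where your proof has a genuine gap.

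The termination witness you exhibit does not pass your own test. You fill the component $U$ of $x$ in $\{y\in\Omega: d(y)>2^{-n-1}\}$ by dyadic cubes whose closures lie in $U$ and assert that the resulting boundary lies in $\{d\le 2^{-n-1}\}$. It does not: every boundary vertex $v$ of that polygon is a corner of a cube whose closure is contained in $U$, hence $v\in U$ and $d(v)>2^{-n-1}$ --- the exact negation of the strict test $d(v)<2^{-n-1}$ you are running, so this candidate is never certified. Repairing this requires taking the defining threshold strictly below the tested one (say $c=2^{-n-3}$, with the cube rank $m$ chosen so that $c+\sqrt{d}\,2^{-m}<2^{-n-1}$), and it also requires an argument you omit: a cube adjacent to the filled region and not contained in $U$ need not meet $\{d\le c\}$ merely because it meets $U^c$, since $\{d>c\}$ may have other components whose closures touch $\overline U$; one must use connectedness of the closed cube (it meets $U$ but is not contained in $U$, so it cannot lie entirely in $\{d>c\}\cup(\R^d\setminus\overline\Omega)$) to produce a point $w$ in it with $\dist(w,\partial\Omega)\le c$, whence $d(v)\le c+\sqrt{d}\,2^{-m}$. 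A cleaner witness avoids all of this: take the connected component containing $x$ of the union of all rank-$m$ dyadic cubes whose closures lie in $\Omega$. Every boundary face of that polygon is adjacent to an excluded cube, whose closure meets both $\Omega$ and its complement and hence meets $\partial\Omega$; so every boundary vertex satisfies $d(v)\le\sqrt{d}\,2^{-m}<2^{-n-1}$ for $m$ large, $\overline P$ is a finite union of closed cubes inside $\Omega$ and so is compactly contained in $\Omega$, and both of your checks certify. With that substitution your argument is complete.
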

\begin{proof}
Let $\{P_k\}$ be a uniformly computable sequence of dyadic polygons such that $\Omega=\cup_k P_k$. We can define a uniformly computable increasing sequence of dyadic polygons by
$$R_\ell:=\cup_{m=1}^\ell P_{k_m}, $$
where
$$k_1:=\min\{k\,:\,x\in P_k\},\quad k_m=\min\{k\,:\,R_{m-1}\cap P_k\neq\emptyset\}.$$
Since $\cup_\ell R_\ell=\Omega$ and $\partial\Omega$ is compact
$$\lim_{\ell\to\infty}\sup_{y\in\partial R_\ell }\dist(y,\partial\Omega)=0.$$
Thus we can use Lemma \ref{lem:inner_computable_dist} to compute a sequence $\ell_n$ so that for any vertex $y$ of $\partial R_{\ell_n}$, considered as a dyadic polygon of rank at least $n+1$, we have
$\dist(y,\partial\Omega)<\frac{2^{-n}}d$.
The sequence $R_{\ell_n}$ is the desired interior approximation.
\end{proof}

We will also need the following result.
\begin{lem}\label{lem:lower_computable_boundary}
Let $\Omega$ be a domain and $x\in\Omega$. Assume that the harmonic measure $\omega_x^\Omega$ is computable relative to $x$. Then $\partial\Omega$ is lower computable.
\end{lem}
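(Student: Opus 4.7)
The plan is to produce, with oracle access to $x$, a uniform enumeration of all dyadic polygons $P\subset[0,1]^d$ whose interior meets $\partial\Omega$. The central observation is that $\omega_x^\Omega$ is a probability measure supported on $\partial\Omega$, so any polygon $P$ with $\omega_x^\Omega(\mathrm{int}(P))>0$ necessarily satisfies $\mathrm{int}(P)\cap\partial\Omega\neq\emptyset$. Thus certifying positive harmonic mass inside $\mathrm{int}(P)$ is a valid witness that $P$ meets the boundary.

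My first step is to make the above effective by lower semi-computing $\omega_x^\Omega(\mathrm{int}(P))$ for each dyadic $P$. I would fix a uniformly computable (in $P$) sequence of $1$-Lipschitz continuous functions $\{f_{P,k}\}$ with $0\le f_{P,k}\le 1$, $\mathrm{supp}(f_{P,k})\Subset \mathrm{int}(P)$, and $f_{P,k}\uparrow \mathbf{1}_{\mathrm{int}(P)}$. Since $\omega_x^\Omega$ is computable relative to $x$, the integrals $\int f_{P,k}\,d\omega_x^\Omega$ form a uniformly computable non-decreasing sequence with supremum $\omega_x^\Omega(\mathrm{int}(P))$. Dovetailing over $P$ and $k$, the algorithm outputs $P$ as soon as one of these lower bounds is certified strictly positive.

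The main obstacle is to verify \emph{completeness} of this enumeration, i.e.\ that every dyadic $P$ with $\mathrm{int}(P)\cap\partial\Omega\neq\emptyset$ is eventually listed. For regular boundary points this is immediate, since harmonic measure is strictly positive on any relative neighborhood of such a point. The delicate case is polar boundary components which carry no harmonic mass (such as the isolated point $0$ of $\partial(\mathbb{D}\setminus\{0\})$). To cover these I would supplement the procedure with the computable harmonic approximation $\{P_n\}$ of $(\Omega,x,\emptyset)$ provided by Theorem \ref{lem:harm_measure_iff_approx}, whose construction can be arranged so that $P_n\Subset\Omega$ and $\bigcup_n P_n=\Omega$, making $\Omega$ itself lower computable. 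A secondary enumeration, dovetailed with the first, then catches every remaining dyadic polygon that meets the lower-computable open set $\Omega$ yet fails to be contained in any member of $\{P_n\}$ up to a small dyadic shift; by connectedness of dyadic polygonal interiors such a polygon must cross $\partial\Omega$. Combining both enumerations yields the desired uniformly computable list of all dyadic polygons whose interior meets $\partial\Omega$, establishing the lower computability of $\partial\Omega$.
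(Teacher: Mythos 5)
Your first enumeration is essentially the paper's proof verbatim: the paper integrates the single uniformly computable function $f_P(y)=\chi_P(y)\,\dist(y,\partial P)$, which is positive exactly on the interior of $P$, and semi-decides whether $\int f_P\,d\omega_x^\Omega>2^{-n}$; your increasing bump functions accomplish the same thing. Up to that point the arguments coincide, and you are right that what this actually enumerates are the dyadic polygons whose interior meets the \emph{support} of $\omega_x^\Omega$. The paper simply asserts that $\int f_P\,d\omega_x^\Omega>0$ if and only if $P$ meets $\partial\Omega$, which is valid precisely when every boundary point carries harmonic mass in each of its neighbourhoods --- e.g.\ when $\Omega$ is regular, which is the only setting in which the lemma is invoked (inside the proof of Theorem \ref{thm:lower_computable}, for computably regular domains).

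Your attempt to repair the polar case, however, contains genuine gaps and in fact cannot succeed. First, the harmonic approximation $\{P_n\}$ of Theorem \ref{lem:harm_measure_iff_approx} does not make $\Omega$ lower computable: its defining conditions only constrain integrals of subharmonic test functions and require $\omega_x^\Omega(\partial\Omega\cap\overline{P_n})<2^{-n}$, so the $P_n$ need neither be compactly contained in $\Omega$ nor exhaust it, and there is no way to ``arrange'' $P_n\Subset\Omega$ using only the computability of $\omega_x^\Omega$ --- certifying $P\Subset\Omega$ is exactly what lower computability of $\Omega$ would provide, so the argument is circular. Second, even granting a lower computable exhaustion of $\Omega$, the condition ``$P$ meets $\Omega$ but is contained in no $P_n$'' is the conjunction of a $\Sigma_1$ and a $\Pi_1$ statement; the negative half is not semi-decidable and cannot drive an enumeration. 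Finally, no supplement can work at this level of generality: for $\Omega=\mathbb{D}\setminus\{a\}$ with $a$ a non-computable point, $\omega_x^\Omega$ coincides with the computable harmonic measure of $\mathbb{D}$, yet $\partial\Omega=S^1\cup\{a\}$ is not lower computable. The polar boundary pieces you correctly worry about are thus an obstruction to the statement itself, not merely to the proof; the lemma should be read, as the paper implicitly does, under a regularity hypothesis guaranteeing that the support of $\omega_x^\Omega$ is all of $\partial\Omega$.
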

\begin{proof}
Let us enumerate all the dyadic polygons in $[0,\,1]^d$. Let us define a uniformly computable sequence
$$f_k(y):=\chi_{P_k}(y)\times\dist(y,\partial P_k).$$
Then we can use the algorithm uniformly computing $\binr{\omega_x^\Omega}{P_k}$ to decide for every pair $(P_k,n)$ whether $\omega_x^\Omega(f_k)>2^{-n}$. Since $\omega_x^\Omega(f_k)>0$ if and only if $P\cap\partial\Omega\neq\emptyset$, this process can be used to enumerate all dyadic polygons intersecting $\partial\Omega$.
\end{proof}

We also need the following estimate on Harnack bound
\begin{lem}\label{uniform_harnack_bound}
Let $\Omega\subset[0,1]^d\subset\R^d$ be a domain and $\gamma\subset\Omega$ be a closed connected set such that for any $y\in\gamma$, $\dist(y,\partial\Omega)>2^{-n}$. Then for any $x_1,\ x_2\in\gamma$ we have the following uniform Harnack bound
$$\tau(x_1, x_2)\leq C(n,d):=C(d)^{2^{nd}}$$
where $C(d)$ is a computable function of $d$.
\end{lem}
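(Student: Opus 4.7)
The plan is to cover $\gamma$ by finitely many dyadic cubes on each of which a ball Harnack inequality applies, and then propagate that estimate along a chain of adjacent cubes connecting $x_1$ to $x_2$. More concretely, I will invoke the classical ball Harnack inequality, which provides a computable constant $C_H(d)$ such that any positive harmonic function $u$ on $B(y,R)$ satisfies $u(z_1)/u(z_2)\leq C_H(d)$ whenever $z_1,z_2\in B(y,R/2)$. Fix $c_d\in\NN$ with $2^{-c_d}\sqrt{d}<1/4$ and set $m:=n+c_d$. Let $\mathcal{Q}$ be the family of closed dyadic cubes of side $2^{-m}$ that meet $\gamma$. Each $Q\in\mathcal{Q}$ has diameter strictly less than $2^{-n-2}$, and since $\gamma\subset[0,1]^d$ one has $|\mathcal{Q}|\leq 2^{c_d d}\cdot 2^{nd}$ (absorbing a harmless dimensional factor into $c_d$). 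For each $Q\in\mathcal{Q}$ pick any $y_Q\in Q\cap\gamma$; the hypothesis $\dist(y_Q,\partial\Omega)>2^{-n}$ yields $B(y_Q,2^{-n-1})\subset\Omega$, and this ball contains $Q$. Applying the ball Harnack inequality to any positive harmonic $u$ on $B(y_Q,2^{-n-1})$ shows that any two points of $Q$ have a $u$-ratio bounded by $C_H(d)$.

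Next, I will verify that the adjacency graph on $\mathcal{Q}$, where $Q\sim Q'$ iff $Q\cap Q'\neq\emptyset$, is connected. Two distinct closed dyadic cubes of equal side length are either disjoint or share at least a vertex, so a partition $\mathcal{Q}=\mathcal{Q}_1\sqcup\mathcal{Q}_2$ into graph-components would produce disjoint closed sets $A_i:=\bigcup_{Q\in\mathcal{Q}_i} Q$ with $\gamma\subset A_1\cup A_2$, contradicting connectedness of $\gamma$. Hence for any $x_1,x_2\in\gamma$ there is a chain $Q_1,\dots,Q_N$ of cubes in $\mathcal{Q}$ with $N\leq 2^{c_d d}\cdot 2^{nd}$, $x_1\in Q_1$, $x_2\in Q_N$, and consecutive cubes sharing a point $v_k\in Q_k\cap Q_{k+1}$.

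Finally, the telescoping product
$$\frac{u(x_1)}{u(x_2)}=\frac{u(x_1)}{u(v_1)}\cdot\frac{u(v_1)}{u(v_2)}\cdots\frac{u(v_{N-1})}{u(x_2)}$$
has each of its $N$ factors bounded by $C_H(d)$, since the two points in each factor lie in a common cube $Q_k$. Therefore $u(x_1)/u(x_2)\leq C_H(d)^N\leq\bigl(C_H(d)^{2^{c_d d}}\bigr)^{2^{nd}}$, and setting $C(d):=C_H(d)^{2^{c_d d}}$ (clearly a computable function of $d$) gives the claimed Harnack bound. The only slightly delicate step is the graph-connectivity argument extracting a cube-chain from connectedness of $\gamma$; the rest is routine bookkeeping with the ball Harnack inequality.
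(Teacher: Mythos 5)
Your argument is correct and follows essentially the same route as the paper's: cover $\gamma$ by dyadic cubes of rank $n$ plus a dimensional constant, bound the Harnack ratio on each cube by a computable $C_H(d)$ via the ball Harnack inequality, and chain the bound along at most $O_d(2^{nd})$ cubes, absorbing the dimensional factor into $C(d)$. You additionally spell out the adjacency-graph connectivity step that the paper leaves implicit in ``apply the previous estimate repeatedly,'' which is a welcome clarification rather than a deviation.
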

\begin{proof}
Let us take $\ell$ such that $2^\ell>d$ and consider the dyadic polygon $P$ consisting of all dyadic cubes of rank $n+l+1$  intersecting $\gamma$. Let $Q\subset P$ be a dyadic cube of rank $n+l+1$. Since distance from this cube to $\partial\Omega$ is bounded bellow by $2^{-n+1}$, there is some explicitly computable $C(d)$ such that $\tau(y_1,y_2)\leq C(d)$ for any $y_1,\ y_2\in Q$. Since $\gamma\subset P$ and $P$ consists of at most $2^{nd}$ cubes, we can apply the previous estimate repeatedly to get the desired bound.
\end{proof}
We will need the following two standard results for our constructions in Section \ref{sec:negative}.
\begin{lem}\label{observation}Let $\Omega\subset B(a,R)\subset\CC$ be a connected subdomain of some disk of radius $R$ in $\CC$ and let $K\Subset \partial\Omega$ be a set of diameter $d\leq R/2$. Then for every $x\in\Omega$,
$$
\omega_x^{\Omega}(K)\le\frac{\log\bb{\frac{2R}{\dist(x,K)}}}{\log R/d}.
$$
\end{lem}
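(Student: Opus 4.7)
The plan is to construct an explicit positive harmonic majorant of the characteristic function $\chi_K$ on $\partial\Omega$ and apply the maximum principle. Since we are in dimension two, the natural barrier to use is a logarithmic one anchored at a point of $K$.

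First, I would pick $z_0\in K$ realizing $|x-z_0|=\dist(x,K)$ and consider the function
$$u(z):=\frac{\log\bigl(2R/|z-z_0|\bigr)}{\log(2R/d)},$$
which is harmonic on $\CC\setminus\{z_0\}$, hence on $\Omega$ (since $z_0\in\partial\Omega$). Next I would verify that $u$ dominates $\chi_K$ on $\partial\Omega$: on $K$, the fact that $K$ has diameter $d$ and contains $z_0$ gives $|z-z_0|\le d$, hence $u(z)\ge 1$; on the rest of $\partial\Omega$, we use that $\partial\Omega\subset\overline{B(a,R)}$ together with $z_0\in\overline{B(a,R)}$ to conclude $|z-z_0|\le 2R$, hence $u(z)\ge 0$.

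Having verified these boundary inequalities, the maximum principle (or the PWB characterization of harmonic measure, applied to the compact set $K\Subset\partial\Omega$) yields
$$\omega_x^\Omega(K)\le u(x)=\frac{\log(2R/\dist(x,K))}{\log(2R/d)}.$$
Finally, since $d>0$ implies $\log(2R/d)\ge\log(R/d)>0$ (the latter is positive because $d\le R/2$), replacing the denominator by the smaller quantity $\log(R/d)$ only weakens the bound, giving the claimed inequality.

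The only subtlety worth flagging is that $u$ blows up at $z_0\in\partial\Omega$, but this does not affect the comparison argument since $u\ge 0$ is bounded below and $u$ tends to $+\infty$ only on a single boundary point, which is compatible with its role as a superharmonic majorant. No further work is required; the bound is essentially a one-line calculation once the right harmonic barrier is in place.
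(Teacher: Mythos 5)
Your proof is correct and follows essentially the same route as the paper: the same logarithmic barrier $\log(2R/|z-z_0|)$ anchored at a point of $K$, normalized so that it dominates $\chi_K$ on $\partial\Omega$, followed by the maximum principle. The only cosmetic differences are that you normalize by $\log(2R/d)$ and then relax to $\log(R/d)$ at the end (the paper normalizes by $\log(R/d)$ from the start), and that you choose $z_0$ to realize $\dist(x,K)$ rather than taking an arbitrary point of $K$; both variants yield the stated bound.
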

\begin{proof}
Let $x_0\in K$ . Define the function $u(\zeta):=\frac{\log\bb{\frac{2R}{\abs{\zeta-x_0}}}}{\log R/d}$.
This function is harmonic in $\Omega$,  non-negative in $B(a,R)$ and for every $\zeta\in\partial K$ we know that
$$u(\zeta)\geq\frac{\log\bb{\frac{2R}{d}}}{\log R/d}\geq1.$$ Thus, by the maximum principle, for every $\zeta\in \Omega$:
$$
\omega_\zeta^{\Omega} (K) \le u(\zeta)
$$
which is the desired inequality.
\end{proof}

\begin{lem}\label{lem:compare_harmonic}
Assume that a domain $\Omega$ is a subdomain of a domain $\Omega'$. Then for any $x\in\Omega$ and for any continuous function $f$ bounded by $1$ we have
$$\abs{\binr{\omega_{x}^{\Omega}}{f}-\binr{\omega^{\Omega'}_{x}}{f}}\leq2\omega_x^{\Omega}(\partial\Omega\cap\Omega').$$
\end{lem}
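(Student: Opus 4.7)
The plan is to use the \emph{balayage} / strong Markov identity relating the two harmonic measures: for $x \in \Omega \subset \Omega'$ one has
$$
\omega_x^{\Omega'} = \int_{\partial\Omega} \omega_y^{\Omega'}\, d\omega_x^\Omega(y),
$$
where, for $y\in\partial\Omega\cap\partial\Omega'$, we interpret $\omega_y^{\Omega'}$ as the Dirac mass $\delta_y$ (so that the identity is just the statement that Brownian motion from $x$ first exits $\Omega$ and then continues to exit $\Omega'$, with the intermediate hitting location distributed as $\omega_x^\Omega$). This follows either from the strong Markov property for Brownian motion or, analytically, from the fact that both sides agree when tested against any bounded continuous function defined on $\partial\Omega'$.

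Once this identity is in hand, I would integrate a globally continuous $f$ bounded by $1$ against both sides and split the integral over $\partial\Omega$ into the two pieces $\partial\Omega\cap\partial\Omega'$ and $\partial\Omega\cap\Omega'$. On the first piece the inner integral $\binr{\omega_y^{\Omega'}}{f}$ equals $f(y)$, by the Dirac convention above. Doing the same decomposition for $\binr{\omega_x^\Omega}{f}$ (trivially, since $\omega_x^\Omega$ lives on $\partial\Omega$ and $f$ is defined there), the contributions from $\partial\Omega\cap\partial\Omega'$ cancel, leaving
$$
\binr{\omega_x^{\Omega'}}{f} - \binr{\omega_x^\Omega}{f}
= \int_{\partial\Omega\cap\Omega'}\bigl(\binr{\omega_y^{\Omega'}}{f} - f(y)\bigr)\, d\omega_x^\Omega(y).
$$
Since each of $\binr{\omega_y^{\Omega'}}{f}$ and $f(y)$ is bounded by $1$ in absolute value, the integrand is at most $2$, and taking absolute values yields the stated bound $2\,\omega_x^\Omega(\partial\Omega\cap\Omega')$.

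The only genuine obstacle is justifying the balayage identity when $\Omega$ or $\Omega'$ is irregular, since in that case $\binr{\omega_y^{\Omega'}}{f}$ need not extend continuously to $y\in\partial\Omega\cap\partial\Omega'$. I would handle this either by invoking it as a standard fact about the Perron--Wiener--Brelot solution (as in the standard references on subharmonic functions cited earlier in the paper), or by an approximation argument: replace $\Omega$ by an exhaustion by slightly smaller regular subdomains $\Omega_k \Subset \Omega$, apply the identity there (where continuity up to $\partial\Omega_k$ is automatic), and pass to the limit using Proposition \ref{lem:approx_is_approx}. Either route gives the estimate with no additional constants, which is what the lemma asserts.
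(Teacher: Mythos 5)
Your argument is correct, but it takes a genuinely different route from the paper's. The paper simply observes that $h(x):=\binr{\omega_{x}^{\Omega}}{f}-\binr{\omega^{\Omega'}_{x}}{f}$ is harmonic in $\Omega$, bounded between $-2$ and $2$, and tends to $0$ at regular points of $\partial\Omega\cap\partial\Omega'$; a two-sided maximum-principle comparison with the harmonic majorant $2\omega_x^{\Omega}(\partial\Omega\cap\Omega')$ then gives the bound in two lines, with no exact identity required. You instead derive the balayage formula $\binr{\omega^{\Omega'}_{x}}{f}-\binr{\omega_{x}^{\Omega}}{f}=\int_{\partial\Omega\cap\Omega'}\bigl(\binr{\omega_y^{\Omega'}}{f}-f(y)\bigr)\,d\omega_x^{\Omega}(y)$ and estimate the integrand by $2$. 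The two proofs are close in spirit --- both isolate $\partial\Omega\cap\Omega'$ as the only place where the two measures can disagree --- but your version buys an exact representation of the error at the cost of having to justify the balayage/strong Markov identity on possibly irregular domains, which is precisely the obstacle you flag. Your proposed fixes (invoking the Perron--Wiener--Brelot theory, or exhausting $\Omega$ by regular subdomains and passing to the limit via Proposition \ref{lem:approx_is_approx}) are both viable, though the maximum-principle phrasing sidesteps the issue entirely, since irregular boundary points are polar and carry no harmonic measure. If you want the shortest self-contained write-up, the direct comparison of $h$ with $2\omega_x^{\Omega}(\partial\Omega\cap\Omega')$ is preferable; your identity is the stronger statement, but the lemma does not need it.
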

\begin{proof}
Observe that the function
$$h(x):=\binr{\omega_{x}^{\Omega}}{f}-\binr{\omega^{\Omega'}_{x}}{f}$$
is harmonic in $\Omega$, tends to zero at regular points of $\partial\Omega\cap\partial\Omega'$ and is bounded between $-2$ and $2$. Thus, by the maximum principle
$$-2\omega_x^{\Omega}(\partial\Omega\cap\Omega')\leq h(x)\leq2\omega_x^{\Omega}(\partial\Omega\cap\Omega'), \quad x\in\Omega$$
\end{proof}

\section{Proofs}\label{sec:proofs}

\subsection{Proofs of Theorems \ref{thm:allcompute} and \ref{thm:uni_comp}}
Let us first show:
\begin{lem}
  \label{lem:harnack1}
  There exists an algorithm with oracles for points $x_0$, $x$ in $\Omega$ which takes as an input the finite description of a dyadic polygon  $Q$ of some rank $\ell$ such that the following holds. Assume that
  \begin{enumerate}
  \item the harmonic measure of $\Omega$ at $x_0$ is computable;
  \item $Q\subset \Omega$;
  \item there is a closed connected set $\gamma\subset Q$, $x\in\gamma$, $x_0\in\gamma$ with the following property: if $y\in\gamma$, then $\dist(y,\partial Q)>d2^{1-\ell}$.
  \end{enumerate}
  Then the algorithm computes the harmonic measure of $\Omega$ at $x$.
\end{lem}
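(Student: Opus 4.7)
The plan is to invoke Theorem \ref{lem:harm_measure_iff_approx} together with Remark \ref{rmk:empty_enough} to reduce the task to producing a uniformly computable harmonic approximation of $(\Omega,x,\emptyset)$. Since $\omega_{x_0}^\Omega$ is computable by hypothesis, Theorem \ref{lem:harm_measure_iff_approx} supplies, uniformly in the input $Q$, a uniformly computable sequence of dyadic polygons $\bset{P_n}$ forming a harmonic approximation of $(\Omega,x_0,Q)$. I would then show that the very same sequence, after a computable reindexing $n\mapsto k(n)$ depending only on the input rank $\ell$ and on $d$, is also a harmonic approximation of $(\Omega,x,Q)$, which in particular is one of $(\Omega,x,\emptyset)$. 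Computability of $\omega_x^\Omega$ then follows by the second half of the proof of Theorem \ref{lem:harm_measure_iff_approx}, feeding the oracle for $x$ into the (uniformly computable) algorithm that integrates test functions against $\omega_x^{P_{k(n)}}$.

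The engine that drives the reindexing is a uniform Harnack estimate on each $P_n$ along $\gamma$. The key observation is that the two relevant auxiliary functions
\[
h_f(z):=f^{\Omega}(z)-f^{P_n}(z) \qquad \text{and} \qquad u(z):=\omega_z^{\Omega}(\partial\Omega\cap\overline{P_n})
\]
are \emph{non-negative} harmonic functions on $P_n$: for $h_f$, with $f$ subharmonic, this is Proposition \ref{prop:subharmonic}(2); for $u$, it is the standard fact that the harmonic measure of a Borel subset of $\partial\Omega$ is a non-negative harmonic function of the base point. Since $\gamma\subset Q\subset P_n$ and every path from a point of $Q$ to $\partial P_n$ must cross $\partial Q$, every $y\in\gamma$ satisfies $\dist(y,\partial P_n)\geq\dist(y,\partial Q)>d\cdot 2^{1-\ell}>2^{-\ell}$. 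Applying Lemma \ref{uniform_harnack_bound} with $P_n$ in the role of $\Omega$ then yields a Harnack bound $\tau_{P_n}(x_0,x)\leq C(\ell,d)$ which is \emph{independent of $n$} and computable from $\ell$ and $d$.

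Combining the hypothesis at $x_0$ with the Harnack bound gives $h_f(x)\leq C(\ell,d)\,h_f(x_0)<C(\ell,d)\cdot 2^{-n}$ and $u(x)\leq C(\ell,d)\,u(x_0)<C(\ell,d)\cdot 2^{-n}$. Setting $k(n):=n+\lceil\log_2 C(\ell,d)\rceil$, the sequence $\bset{P_{k(n)}}$ inherits property (1) because $x\in\gamma\subset Q\subset P_{k(n)}$, and satisfies properties (2) and (3) of a harmonic approximation at $x$ with precision $2^{-n}$. The advertised algorithm thus reads $\ell$ from the description of $Q$, computes $k(n)$, runs the computable harmonic approximation of $(\Omega,x_0,Q)$ at index $k(n)$, and uses the fact that $\omega_\cdot^{P_{k(n)}}$ is uniformly computable for any dyadic polygon to produce a $2^{-n}$-approximation of $\binr{\omega_x^\Omega}{g_j}$ via the oracle for $x$.

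The main obstacle in this plan is the positivity of $h_f$: without $h_f\geq 0$ on $P_n$, transferring the bound at $x_0$ to a bound at $x$ would require a much stronger tool than Harnack's inequality. It is precisely for this reason that property (2) in the definition of a harmonic approximation is stated using \emph{subharmonic} test functions—Proposition \ref{prop:subharmonic}(2) then supplies the inequality $f\leq f^{P_n}\leq f^\Omega$ on $P_n$, which is exactly what makes the Harnack step go through.
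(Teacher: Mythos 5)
Your overall strategy --- reindex the computable harmonic approximation of $(\Omega,x_0,Q)$ by an amount determined by a Harnack constant along $\gamma\subset Q$ that is computable from $\ell$ and $d$ --- is exactly the paper's, and your treatment of property (3) via the positive harmonic function $u(y)=\omega_y^\Omega(\partial\Omega\cap\overline{P_n})$ is correct. The gap is in property (2): you assert that $h_f=f^\Omega-f^{P_n}$ is a non-negative harmonic function on $P_n$, citing Proposition \ref{prop:subharmonic}(2). That proposition requires $P_n\subset\Omega$, but nothing in the definition of a harmonic approximation, nor in the construction of Theorem \ref{lem:harm_measure_iff_approx} (a greedy search over \emph{all} dyadic polygons containing $x_0$ and $Q$, in which only conditions (2)--(3) are verified), guarantees $P_n\subset\Omega$; indeed condition (3), $\omega_{x_0}^\Omega(\partial\Omega\cap\overline{P_n})<2^{-n}$, is in the definition precisely because $P_n$ may overflow $\partial\Omega$. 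Since $\Omega$ in Lemma \ref{lem:harnack1} is not assumed lower computable, you cannot repair this by intersecting with an interior approximation. When $P_n\not\subset\Omega$, the function $f^\Omega$ is only subharmonic on $P_n\setminus\Omega$, and $h_f$ can genuinely change sign on $Q$ --- for instance, if $\Omega\subset P_n$ then Proposition \ref{prop:subharmonic}(2) gives the \emph{reverse} inequality $f^\Omega\le f^{P_n}$. So Harnack cannot be applied to $h_f$ directly, and this is the heart of the proof rather than a technicality.

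The paper's argument supplies the missing step: set $w:=f^{P_n}-f^\Omega$, which is subharmonic on $\Omega$ (the polygon $P_n$ is regular, so $f^{P_n}$ is subharmonic everywhere by Proposition \ref{prop:subharmonic}(1)), tends to $0$ at regular points of $\partial\Omega$ outside $\overline{P_n}$, and is at most $1$ elsewhere on $\partial\Omega$; the maximum principle then gives $w\le u$ on $\Omega$. Hence $v:=h_f+u=u-w\ge 0$, and $v$ \emph{is} harmonic on $Q$ (all three summands are), so Harnack along $\gamma\subset Q$ applies to $v$ and to $u$, yielding $-u(x)\le h_f(x)\le v(x)\le Cv(x_0)\le C(2^{-k_n}+2^{1-k_n})$, with both bounds below $2^{-n}$ after your reindexing. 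You should replace the positivity claim for $h_f$ with this correction-by-$u$ argument, and note that the Harnack chain must live in $Q$ rather than in $P_n$, since neither $h_f$ nor $u$ need be harmonic on all of $P_n$.
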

\begin{proof}

  Using Theorem \ref{lem:harm_measure_iff_approx}, we can find a computable harmonic approximation $\bset{P_n}$ of $(\Omega,x_0, Q)$ such that
  $Q\subset P_{n}$ for all $n$.

We can apply Lemma \ref{uniform_harnack_bound} to compute a Harnack bound $C=C(n,d)>0$ between $x$ and $x_0$ in $Q$.

Compute $k_n>n+1$ large enough so that $2^{-n}>2^{2-k_n}\cdot C.$
Let us show that $Q_n:=P_{k_n}$ is a computable harmonic approximation of $(\Omega, x, Q)$. By Theorem \ref{lem:harm_measure_iff_approx} and Remark \ref{rmk:empty_enough}, the harmonic measure at $x$ is also computable.

The first property of the harmonic approximation holds automatically, since $$x\in Q\subset P_{k_n}=Q_{n}.$$

To check  the third property, use the Harnack bound $C$ for the positive and harmonic in $Q$ function
$$u(y):=\omega^\Omega_y(\partial\Omega\cap \overline{P_{k_n}})$$
to get
\begin{equation}\label{eq:harm_bound}\omega^\Omega_x(\partial\Omega\cap \overline{P_{k_n}})\leq C\omega^\Omega_{x_0}(\partial\Omega\cap \overline{{P_{k_n}}})<C2^{-k_n}<2^{-n-1}.
\end{equation}

To check the second property, fix a positive subharmonic $C^2$-smooth $f$ bounded by $1$. Then, the function
$$w(y):=f^{Q_n}(y)-f^\Omega(y)$$
is subharmonic on $\Omega$, as a difference of a subharmonic in $\Omega$ function $f^{Q_n}(y)$ and a harmonic function $f^{\Omega}(y)$.

Observe that $w(y)=f(y)-f(y)=0$ for any regular point $y\in\partial\Omega$ which is  not in $Q_n$. Also $w(y)\leq1$. Thus, by maximum principle, $w(y)\leq u(y)$.

Now we can define a positive function
$$v(y):=f^{\Omega}(y)-f^{Q_n}(y)+u(y)=u(y)-w(y)\geq0$$

Since all three functions defining $v$ are harmonic on  $Q$, so is $v$. Now we can use the Harnack bound $C$ and the inequality \eqref{eq:harm_bound} to get
\begin{multline}-2^{-n}<-u(x)\leq\left(f^\Omega(x)-f^{Q_n}(x)\right)\leq v(x)\leq Cv(x_0)=\\ C\left(f^\Omega(x_0)-f^{Q_n}(x_0)+u(x_0)\right)<C(2^{-k_n}+2^{1-k_n})<2^{-n}\end{multline}
which is exactly the second property.

\end{proof}

\begin{proof}[Proof of Theorem \ref{thm:allcompute}]
Since a dyadic polygon $Q\subset\Omega$ satisfying condition 3 of the previous Lemma always exists for any two points $x_0,\, x\in\Omega $, Theorem \ref{thm:allcompute} immediately follows from Lemma \ref{lem:harnack1}. Note that the computability of the harmonic measure $\omega_x^\Omega$ is not uniform relative to $x$ since the algorithm for its computation requires the input of such a polygon as a parameter.
 \end{proof}

\begin{proof}[Proof of Theorem \ref{thm:uni_comp}]
Since $\Omega$ is lower computable, for any $x$ we can compute a dyadic polygon $P$ of rank $n$ such that $x\in P$, $x_0\in P$. We can compute a closed connected dyadic set $\gamma\Subset P$ which contains $x$ and $x_0$. Both $\gamma$ and $P$ are computable, so we can compute an $\ell\geq n$ such that
$\dist(y,\partial P)>2^{1-\ell}d$ for all $y\in\gamma$. Take $Q$ to be $P$ considered as a dyadic polygon of rank $\ell$. Then $Q$ satisfies conditions 1.-3. of Lemma \ref{lem:harnack1}. Thus we can use the algorithm provided by Lemma \ref{lem:harnack1} with input $Q$ to compute the harmonic measure $\omega^\Omega_x$.
\end{proof}

\subsection{Proof of Theorem \ref{thm:lower_computable}}

\begin{lem}\label{lem:interior_to_harmonic}
Let $\Omega\subset [0,\,1]^d$ be a computably regular domain and let  $x\in\Omega$. Assume that $(\Omega, x)$ has a computable interior approximation $\{P_k\}$. Then for some computable subsequence $k_n$, the sequence $\left\{P_{k_n}\right\}$ is a harmonic approximation of $(\Omega, x, \emptyset)$.
\end{lem}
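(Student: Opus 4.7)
The plan is to verify the three defining properties of a harmonic approximation of $(\Omega,x,\emptyset)$ for the subsequence $\{P_{k_n}\}$. Properties (1) and (3) will be essentially free from the hypothesis that $\{P_k\}$ is an interior approximation, while the quantitative property (2) is the only place where the computable regularity hypothesis enters.

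Property (1) holds because $x\in P_k$ for every $k$ by definition of an interior approximation, so in particular $x \in P_{k_n}$ for any subsequence. Property (3) is automatic as well: since $P_k \Subset \Omega$, the closure $\overline{P_k}$ is a compact subset of $\Omega$, so $\partial\Omega\cap \overline{P_k}=\emptyset$ and the measure is zero.

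The key step is a uniform pointwise estimate which I would prove first. Let $\eps(\cdot)$ be the computable regularity modulus of $\Omega$, and fix any $C^2$-smooth subharmonic $1$-Lipschitz function $f$ bounded by $1$ on $[0,1]^d$. Suppose $y\in\Omega$ satisfies $\dist(y,\partial\Omega)<\eps(m)$. Using that $\omega_y^\Omega$ is a probability measure supported on $\partial\Omega$, write
$$f^\Omega(y) - f(y) = \int_{\partial\Omega}\bigl(f(z)-f(y)\bigr)\,d\omega_y^\Omega(z)$$
and split the integral into the part over $B(y,2^{-m})$, where the $1$-Lipschitz property gives $|f(z)-f(y)|\le 2^{-m}$, and its complement, where $|f(z)-f(y)|\le 2$ but the measure is at most $2^{-m}$ by computable regularity. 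This yields $|f^\Omega(y)-f(y)|\le 3\cdot 2^{-m}$ for every such $f$.

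To conclude property (2), note that because both $f^\Omega$ and $f^{P_k}$ are harmonic in $P_k$ and $f^{P_k}=f$ on $\partial P_k$, we have
$$f^\Omega(x)-f^{P_k}(x)=\int_{\partial P_k}\bigl(f^\Omega(y)-f(y)\bigr)\,d\omega_x^{P_k}(y).$$
For $y\in\partial P_k$, the interior approximation property gives $\dist(y,\partial\Omega)<2^{-k}$. Define $k_n$ to be the least integer $k$ with $2^{-k}\le\eps(n+2)$; this is computable uniformly in $n$ since $\eps$ is computable. Then the pointwise bound applies at every boundary point of $P_{k_n}$ with $m=n+2$, and integration gives $|f^\Omega(x)-f^{P_{k_n}}(x)|\le 3\cdot 2^{-(n+2)}<2^{-n}$, uniformly over all admissible $f$. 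Since $\{P_k\}$ is uniformly computable and $k_n$ is computable, the subsequence $\{P_{k_n}\}$ is uniformly computable. The only nontrivial ingredient is the pointwise bound in the third paragraph; everything else is bookkeeping.
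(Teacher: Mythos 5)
Your proof is correct and follows essentially the same route as the paper's: choose $k_n$ computably so that $2^{-k_n}$ falls below the computable regularity modulus at level $n+2$, establish the pointwise bound $|f^\Omega(y)-f(y)|\le 3\cdot 2^{-(n+2)}$ for $y\in\partial P_{k_n}$ by splitting the integral over $B(y,2^{-n-2})$ and its complement, and then transfer the bound to $x$. The only immaterial difference is in that last transfer step, where the paper invokes the maximum principle for the harmonic function $f^\Omega-f^{P_{k_n}}$ on $P_{k_n}$ while you integrate the boundary estimate against $\omega_x^{P_{k_n}}$.
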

\begin{proof}
Using the computable regularity of $\Omega$, we can compute a sequence $k_n>n+d$ so that
\begin{equation}\label{eq:unif_harm}
\dist(y,\partial\Omega)\leq d2^{-k_n+1}\Rightarrow \omega_y^\Omega(B(x,2^{-n-2}))>1-2^{-n-2}.
\end{equation}

We will show that $\{P_{k_n}\}$ is a harmonic approximation of $(\Omega, x, \emptyset)$.

Indeed, by the definition of the interior approximation, $x\in P_{k_n}$ and $P_{k_n}\cap\partial\Omega=\emptyset$. So the conditions 1) and 3) of the definition of harmonic approximation hold.

To prove the second condition, let us consider any $C^2$-smooth subharmonic function bounded by $1$. Observe that for any $y\in\partial P_{k_n}$, we can use the bound \eqref{eq:unif_harm}  and the fact that $f^{P_{k_n}}(y)=f(y)$ to get:
\begin{multline*}f^{\Omega}(y)-f^{P_{k_n}}(y)\leq\left(\int_{\partial\Omega\cap B(y, 2^{-n-2})}+\int_{\partial\Omega\setminus B(y, 2^{-n-2})}\right)\abs{f(z)-f(y)}\,d\omega_y^{\Omega}(z)\leq\\ 2^{-n-2}+2\times2^{-n-2}<2^{-n}.
\end{multline*}
By the maximum principle applied to harmonic in $P_{k_n}$ function $f^{\Omega}-f^{P_{k_n}}$, the same inequality also holds for $x$.
\end{proof}

\begin{proof}[Proof of the implication 1.$\implies$3. in Theorem \ref{thm:lower_computable}]
Let $\Omega$ be a lower computable computably regular domain with a lower computable boundary. Let $x_0\in\Omega$ be a computable point. By Corollary \ref{cor:compute_inner}, $(\Omega, x_0)$ has a computable interior approximation. By Lemma \ref{lem:interior_to_harmonic}, $(\Omega, x_0,\emptyset)$ also has a harmonic approximation. By Theorem \ref{lem:harm_measure_iff_approx}, the harmonic measure $\omega_{x_0}^\Omega$ is computable. Finally, since $\Omega$ is lower computable, Theorem \ref{thm:uni_comp} implies that the harmonic measure in $\Omega$ is uniformly computable.
\end{proof}

\begin{proof}[Proof of the implication 2.$\implies$1. in Theorem \ref{thm:lower_computable}]
Assume that for $x_0\in\Omega$ the harmonic measure $\omega_{x_0}^\Omega$ is computable. Lemma \ref{lem:lower_computable_boundary} implies that the boundary of $\Omega$ is lower computable.

For a dyadic cube $Q$, let $x_Q$ be its center. Let $\varphi$ be a computable function such that
$$\varphi(x)=
\begin{cases}
1,&\|x\|\leq1/2\\
0,&\|x\|\geq1\\
\geq0\text{ and }\leq1,1\geq\|x\|\leq1/2
\end{cases}
$$
and
\begin{equation}\label{eq:step_function}\varphi_{Q,n}(x):=\varphi(2^{n+1}d(x-x_Q))\end{equation}
Then for an enumeration of all dyadic squares, the sequence $\{\varphi_{Q,n}\}$ is uniformly computable.

Take $k_n$ defined in equation \eqref{eq:unif_harm} and let $C=C(k_n,d)$ be the constant from Lemma \ref{uniform_harnack_bound}.
By uniform computability of $\{\varphi_{Q,n}\}$ there is a uniformly computable sequence $M_{{Q,n}}$ such that
\begin{equation}\label{eq:compute_interior}
\abs{M_{{Q,n}}-\binr{\omega_{x_0}^\Omega}{\varphi_{Q,n}}}<C^{-1}2^{-n-2}
\end{equation}

Call a cube $Q$ of rank $k_n$ \emph{interior} if $M_{{Q,\ n}}<C^{-1}2^{-n-1}$. Thus $\binr{\omega_{x_0}^\Omega}{\varphi_{Q,n}}<C^{-1}2^{-n}$.

Observe that if $x$ is an interior point, then for some $n$, $\dist(x,\partial\Omega)>3d2^{-n}$. The dyadic cube $Q$ of rank $k_n$ containing $x$ is interior, since $\binr{\omega_{x_0}^\Omega}{\phi_{Q,n}}=0$ and $\abs{M_{{Q,n}}}<C^{-1}2^{-n-2}$. Thus every interior point of $\Omega$ is contained inside an interior cube.

Let $P_n$ be the maximal dyadic polygon consisting of interior cubes and containing $x_0$. $P_n$ can be computed uniformly in $n$ by computing $M_{{Q,n}}$ for all dyadic cubes of rank $k_n$.  By the previous observation, $\Omega\subset\cup_n P_n$.

Let us show that $P_n\subset \Omega$ and, moreover
\begin{equation}\label{eq:far}
\dist(x,\partial\Omega)>d2^{-k_n+1},\quad x\in P_n
\end{equation}
Indeed, let $z$ be the closest to $x_0$ point of $P_n$ for which $\dist(z,\partial\Omega)=d2^{-k_n+1}$. Let $Q$ be the dyadic cube of rank $k_n$ containing $x$. Then, by Lemma  \ref{uniform_harnack_bound}, since for $y\in Q$, $\dist(y,\partial\Omega)\geq d2^{-k_n}$, we have

$$
\omega_{z}^\Omega(B(z, 2^{-n}))\leq\binr{\omega_{x_Q}^\Omega}{\phi_{Q,\ n}}\leq C\binr{\omega_{x_0}^\Omega}{\phi_{Q,\ n}}<2^{-n}
$$
which, by the definition of $k_n$ in \eqref{eq:unif_harm}, means that $dist(z,\partial\Omega)> d2^{-k_n+1}$. This contradiction proves that $P_n\subset\Omega$ and so $\Omega=\cup_n P_n$. Since $\{P_n\}$ is uniformly computable, $\Omega$ is lower computable.
\end{proof}

\subsection{Proof of Theorem \ref{thm:computable_boundary}}
\begin{proof}[Proof of Theorem \ref{thm:computable_boundary}]
Since a domain with computable boundary is lower computable, the implication 1.$\implies$ 3. follows from  Theorem \ref{thm:lower_computable} and the implication 2.$\implies$3. follows from Theorem \ref{thm:uni_comp}. The implication 3.$\implies$2. is trivial.

To prove the remaining implication 3.$\implies$1., first note that by compactness and regularity of $\Omega$, there exists a sequence $k_n$ satisfying \eqref{eq:unif_harm}. Let us describe an algorithm for uniformly computing such $k_n$, which would imply the computable regularity of $\Omega$. Define ${\mathcal P}_n$ to be the collection of dyadic cubes of rank $n+d$ such that one of their dyadic neighbours intersect $\partial\Omega$. By computability of $\partial\Omega$, ${\mathcal P}_n$ is uniformly computable.

Given $n$, consider the finite collection $\{\varphi_{Q,n}\}_{Q\in{\mathcal P}_n}$, where $\varphi_{Q,n}$ are defined in \eqref{eq:step_function}. Notice that the sequence $\{\varphi_{Q,n+1}\}$ is uniformly computable as a sequence indexed by $Q$ and $n$.

By the uniform computability of harmonic measure, the harmonic functions
$$u_{Q,n}(x):=\binr{\omega_x^\Omega}{\phi_{Q,n+3}}$$
are uniformly computable. Thus one can uniformly compute $\varepsilon(n)$ such that
\begin{equation}\label{eq:mod_continuity}
x,y\in\Omega;\ \|x-y\|<\varepsilon(n)\implies\abs{u_{Q,n}(x)-u_{Q,n}(y)}<2^{-n-3}.
\end{equation}

Start with $k$ such that $d2^{-k}<\varepsilon(n)$ and $k>n+d$. Note that for any dyadic cube $Q'\in{\mathcal P}_k$ with rank $k$ there is $Q\in{\mathcal P}_n$ with $Q'\subset Q$. Check that for all such dyadic cubes $Q'$ of rank $k$ the following property holds:
\begin{equation}\label{eq:stopping_time}
Q'\in{\mathcal P}_k,\ Q\in{\mathcal P}_n,\ Q'\subset Q\implies\binr{\omega_{x_Q'}^\Omega}{\phi_{Q,n+3}}=u_{Q,n}(x_Q)>1-2^{-n-3}
\end{equation}
If the condition \eqref{eq:stopping_time} fails for one of the $Q'$, increase $k$ by $1$ and repeat. By regularity of $\Omega$ and compactness, the process will eventually stop.

When the process stop, we compute $k$ such that the condition \eqref{eq:stopping_time} holds. Take $k_n=k+d$. If $\dist(x,\partial\Omega)\leq d2^{-k_n+1}$ then the dyadic cube $Q'$ of rank $k$ with $x\in Q'$ must satisfy $Q'\in{\mathcal P}_k$. Thus, we have, using \eqref{eq:mod_continuity} and \eqref{eq:stopping_time}
\begin{multline*}
\omega_x^\Omega(B(x, 2^{-n-2}))\geq u_{Q,n}(x)\geq u_{Q,n}(x_Q)-\abs{u_{Q,n}(x_Q)-u_{Q,n}(x)}>\\1-2\times2^{-n-3}=1-2^{-n-2}
\end{multline*}
which is exactly \eqref{eq:unif_harm}.
\end{proof}

\subsection{Proofs of Theorems \ref{thm:nocompute_measure} and \ref{thm:nocompute_Dirichlet}}\label{sec:negative}

Now we are ready to describe the construction which will serve as the basis for all the proofs in this section.

Let $x_n:=1-2^{-n}$. Fix a non-computable lower-computable set $B$ with a computable enumeration $\left\{b_k\right\}_{k=1}^\infty$.

Let $r_n^k:=\exp\left(-8^k\right)$ for $k\geq n$. Observe that by Lemma \ref{observation} applied to $K=B(x_n, r_n^k)$ and the domain $\DD\setminus K\subset \DD$, we have the estimate

\begin{equation}\label{eq:up1}\omega^{\DD\cap B(x_n, r_n^k)}_x(x_n+r_n^kS^1)<4^{-k}, \text{ if } x\in\DD\text{ and }  |x-x_n|>4^{-k}.
\end{equation}

Let us define
\begin{equation}
\Omega_0:=\DD\setminus\cup_{k\geq1}\left\{y\,:\,|y-x_k|\leq r_k^k\right\}
\end{equation}

The domains we construct in this section will be contained in the unit disk and will contain the domain $\Omega_0$.
More specifically, we will consider the domains of the form
\begin{equation}\label{eq:def:domain}
\Omega^*:=\DD\setminus\cup_{n} C_n
\end{equation}
with $C_n$ defined by
\begin{equation}\label{eq:def:domain:more}
C_n=\begin{cases}
A^{k+n}_n,& n=b_k\\
A^\infty_n,&n\notin B
\end{cases}
\end{equation}
where $A^k_n\subset B(x_n, r_n^n)$,  $A^\infty_n\subset B(x_n, r_n^n)$ be uniformly computable compacts, which satisfy the property
\begin{equation}\label{eq:close_to_infinity}
\omega_0^{\DD\setminus (A^k_n\cup A^\infty_n)}\left(A^k_n\Delta A^\infty_n\right)\leq 2^{-n-k}
\end{equation}
\begin{figure}
\centering
  \includegraphics[width=.3\linewidth]{./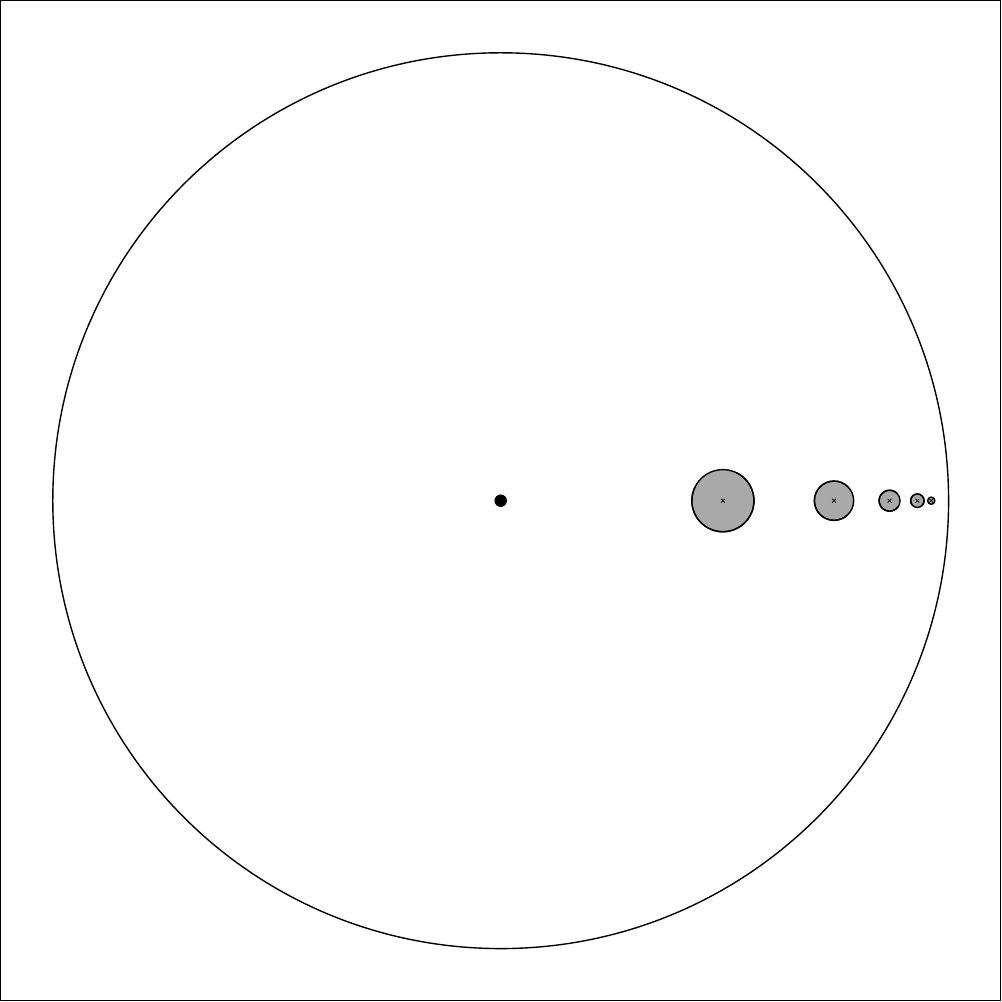}
  \caption{\label{fig:Omega_0} The domain $\Omega_0$}
\end{figure}
Let us observe that since the sequences $\left\{A^k_{b_k}\right\}$ and $\left\{A^\infty_n\right\}$ are uniformly computable, $\partial\Omega^*$ will always be lower computable. Actually, even more is true, as we show in the next lemma.

\begin{lem}\label{lem:harm_measure_computable}
For any domain $\Omega^*$ of the form \eqref{eq:def:domain} and any $x\in\Omega^*$, the harmonic measure $\omega_x^{\Omega^*}$ is computable.
\end{lem}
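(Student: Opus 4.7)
The plan is to approximate $\Omega^*$ by a computable domain $\Omega_s^N$ that removes only finitely many obstacles (those with $m \le N$) chosen from our partial knowledge at stage $s$ of the enumeration of $B$. The total error will decompose into a \emph{truncation} part (ignoring $C_m$ for $m > N$) and a \emph{stage} part (using $A_m^\infty$ in place of $A_m^{j+m}$ for undiscovered $m = b_j$, $j > s$); the first will be controlled by Lemma \ref{observation} and the doubly-exponential smallness of $r_m^m = e^{-8^m}$, the second by the hypothesis \eqref{eq:close_to_infinity} together with a Harnack transfer from $0$ to $x$.

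Fix $x \in \Omega^*$, a precision $2^{-n}$, and a computable $1$-Lipschitz $f$ bounded by $1$. First, I would note that a Brownian-motion coupling gives $\omega_x^{\Omega^*}(\partial C_m) \le \omega_x^{\DD \setminus C_m}(\partial C_m)$, which by Lemma \ref{observation} is bounded by $\log(2/|x-x_m|)/8^m$ as soon as $|x-x_m| \ge 4^{-m}$, true for all but finitely many $m$ since $x_m \to 1$. Using a computable lower bound on $|x-1|$, I would select $N = N(x,n)$ with $\sum_{m>N}\omega_x^{\Omega^*}(\partial C_m) < 2^{-n-3}$, so that Lemma \ref{lem:compare_harmonic} yields $|\omega_x^{\Omega^*}(f) - \omega_x^{\Omega_\infty^N}(f)| < 2^{-n-2}$, where $\Omega_\infty^N := \DD \setminus \bigcup_{m\le N} C_m$. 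Next I would enumerate $B$; at stage $s$, set $C_m^{(s)} := A_m^{j+m}$ when $b_j = m$ for some $j \le s$ and $C_m^{(s)} := A_m^\infty$ otherwise, and define $\Omega_s^N := \DD \setminus \bigcup_{m\le N} C_m^{(s)}$. Since $\bigcup_{m \le N} C_m^{(s)}$ is a uniformly computable compact, $\omega_x^{\Omega_s^N}(f)$ is uniformly computable via polygonal inner approximations of $\Omega_s^N$ and Proposition \ref{lem:approx_is_approx}.

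To bound the stage error $|\omega_x^{\Omega_s^N}(f) - \omega_x^{\Omega_\infty^N}(f)|$, only the $m \in M_s := \{m \le N : m = b_j,\; j > s\}$ contribute. Two applications of Lemma \ref{lem:compare_harmonic} through the common subdomain $\DD \setminus (A_m^{j+m} \cup A_m^\infty)$ reduce the estimate to $4\sum_{m\in M_s} \omega_x^{\DD\setminus(A_m^{j+m}\cup A_m^\infty)}(A_m^{j+m}\Delta A_m^\infty)$. A Harnack inequality (Lemma \ref{uniform_harnack_bound}) along a piecewise-linear path $\gamma$ from $0$ to $x$ lying in $\Omega^*$ and avoiding every ball $B(x_m,r_m^m)$ transfers \eqref{eq:close_to_infinity} from $0$ to $x$ with a uniform constant $C(x)$, giving
\[
|\omega_x^{\Omega_s^N}(f)-\omega_x^{\Omega_\infty^N}(f)| \;\le\; 4\,C(x)\sum_{m \in M_s} 2^{-2m-j_m} \;\le\; 4\,C(x)\sum_{j>s} 2^{-j} \;=\; 8\,C(x)\,2^{-s}.
\]
Choosing $s$ so that $8\,C(x)\,2^{-s} < 2^{-n-1}$ makes the overall error below $2^{-n}$, establishing the computability of $\omega_x^{\Omega^*}$.

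The hard part will be establishing a \emph{uniform} Harnack constant $C(x)$ across all the sub-domains $\DD\setminus(A_m^{j+m}\cup A_m^\infty)$. Since $r_m^m = \exp(-8^m)$ shrinks much faster than the spacing between consecutive $x_m$'s, the balls $B(x_m, r_m^m)$ are well-separated and accumulate only at $\{1\}$; I would choose $\gamma$ to stay at a positive distance $\delta(x) > 0$ from all of them, using that for $m$ larger than a computable $m_0(x)$ the ball $B(x_m, r_m^m)$ is already far from any reasonable path from $0$ to $x$. For the finitely many $m \le m_0(x)$, in case $x$ happens to lie inside or close to some $B(x_m, r_m^m)$, a preliminary ``wait-and-see'' step using the oracle for $x$ together with the enumeration of $B$ (to decide, for each such $m$, whether $x \in A_m^\infty$ before proceeding) produces a path with positive boundary distance and hence a finite Harnack bound.
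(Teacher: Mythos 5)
Your overall architecture (truncate the tail of obstacles using Lemma \ref{observation} and the doubly-exponential smallness of $r_m^m$, then replace the surviving finitely many obstacles by their stage-$s$ versions and control the swap via \eqref{eq:close_to_infinity}) is sound, but two steps have genuine gaps. First, you assert that $\omega_x^{\Omega_s^N}(f)$ is computable ``via polygonal inner approximations of $\Omega_s^N$ and Proposition \ref{lem:approx_is_approx}.'' That proposition gives convergence with \emph{no rate}, and the paper warns immediately after it that a computable interior approximation does not by itself yield a computable harmonic measure. What is actually needed is Lemma \ref{lem:interior_to_harmonic}, i.e.\ computable regularity of the finitely-connected computable-boundary domain $\Omega_s^N$ (this is exactly how the paper treats its analogous domains $\Omega_n'$). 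Second, the Harnack transfer of \eqref{eq:close_to_infinity} from $0$ to $x$ is not repaired by your ``wait-and-see'' step as stated: the unknown that matters is what $C_m$ actually is (whether $m\in B$, and with which index $j$), not whether $x\in A_m^\infty$, and if $m\notin B$ the enumeration never confirms this, so no finite stage certifies that your path $\gamma$ stays at positive distance from $\partial\Omega^*$. The legitimate fix is to exploit the non-uniformity the statement permits: for the at most one $m$ with $x$ near $\overline{B(x_m,r_m^m)}$, hard-wire into the machine for this particular $x$ whether $m\in B$ and the corresponding $j$; all remaining Harnack chains may simply avoid every ball $\overline{B(x_m,r_m^m)}$ with $m$ undetermined, yielding a finite computable $C(x)$.

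You should also be aware that the paper sidesteps this entire difficulty by a different global organization: it proves computability only of $\omega_0^{\Omega^*}$ --- at $x=0$ the hypothesis \eqref{eq:close_to_infinity} applies verbatim, no Harnack transfer is needed, and $0$ is far from every obstacle --- by exhibiting a uniformly computable harmonic approximation $\{R_n\}$ of $(\Omega^*,0,\emptyset)$ built from computable finitely-connected domains $\Omega_n'$ and invoking Theorem \ref{lem:harm_measure_iff_approx}; the case of arbitrary $x$ then follows at once from Theorem \ref{thm:allcompute}, which already packages the Harnack bookkeeping you are redoing by hand. Running your truncation-plus-stage argument at the single point $0$ and then citing Theorem \ref{thm:allcompute} would eliminate both the path construction and the dependence of $N$ and $s$ on $x$, and would close the second gap entirely.
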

\begin{proof}
By Theorem \ref{thm:allcompute}, it is enough to prove that $\omega_0^{\Omega^*}$ is computable.

Let $D_n$ be the unit disk with a thin semi-strip removed
$$D_n:=\DD\setminus\{z\,:\,\Re z\geq1-2^{-n}-r^n_n;\ |\Im z|\leq r^n_n\}$$
Note that
\begin{equation}\label{eq:contain}
\Omega^*\setminus \cup_{k=1}^n C_k\subset D_n
\end{equation}

We can explicitly compute sequence $\{m_n\}$ such that
$$\omega_0^{D_{m_n}}(\DD\setminus D_{m_n})<2^{-n-3}.$$

Let us define the domain
$$\Omega'_n:=D_{m_n}\setminus\cup_{\ell=1}^{m_n }C'_\ell$$
where
$$C'_{\ell}:=\begin{cases}
A^k_n,& n=b_k; k\leq n+3\\
A^\infty_n,&\text{ otherwise}
\end{cases}$$
$\Omega'_n$ is a finitely-connected planar domain. $\partial\Omega'_n$ is computable (since to compute it one only uses the first $n+3$ elements of $\{b_k\}$).
Utilizing Theorem \ref{lem:harm_measure_iff_approx} and Lemma \ref{lem:interior_to_harmonic}, for each $n$ one can compute interior harmonic approximation $\left\{P_k^n(\Omega'_n)\right\}_{k=1}^\infty$. The sequence $\left\{R_n\right\}_{n=1}^\infty\left\{P_{n+1}^n(\Omega'_n)\right\}_{n=1}^\infty$ is therefore uniformly computable.

Observe that by the maximum principle and relations \eqref{eq:close_to_infinity}, \eqref{eq:contain}, we have
$$\omega_0^{\Omega^{*}}(\partial\Omega^{*}\cap\Omega'_n)\leq 2^{-n-3}+\sum_{k=n+3}^\infty 2^-k=2^{-n-2}$$

Thus, by Lemma \ref{lem:compare_harmonic}, for any continuous $f$ bounded by $1$, we have
$$\abs{\binr{\omega_{x}^{\Omega^{*}}}{f}-\binr{\omega^{\Omega'_n}_{x}}{f}}\leq2\omega_x^{\Omega^{*}}(\partial\Omega^{*}\cap\Omega'_n)\leq 2^{-n-1}.$$

By the definition of harmonic approximation, this implies that for any subharmonic smooth 1-Lipshitz function bounded by $1$ we have
$$\abs{\binr{\omega_{x}^{\Omega^{*}}}{f}-\binr{\omega^{R_n}_{x}}{f}}\leq\abs{\binr{\omega_{x}^{\Omega^{*}}}{f}-\binr{\omega^{\Omega'_n}_{x}}{f}}+\abs{\binr{\omega_{x}^{R_n}}{f}-\binr{\omega^{\Omega'_n}_{x}}{f}}< 2^{-n}$$

Also, by the construction of $\Omega^{*}$, we have
$$\omega_0^{\Omega^{*}}(\partial\Omega^{*}\cap \overline{R_n})\leq\sum_{k=n+3}^\infty \omega_0^{\DD\setminus (A^k_n\cup A^\infty_n)}\left(A^k_n\Delta A^\infty_n\right)< 2^{-n-3}$$

So $\left\{R_n\right\}$ is a harmonic approximation of $(\Omega,0,\emptyset)$, so, by Theorem \ref{lem:harm_measure_iff_approx}, the measure $\omega_0^{\Omega^*}$ is computable.
\end{proof}

\begin{proof}[Proof of Theorems \ref{thm:nocompute_measure} and \ref{thm:nocompute_Dirichlet}]

Let $f$ be any nonnegative, bounded by $1$ computable function such that $f$ is equal to $1$ on the arc $\left\{z\,:\,|z|=1, \Re z\leq -1/2\right\}$ and equal to $0$ for all $z$ with $\Re z\geq0$.

We will use domains
$$E_n:=\DD\setminus\left\{z\,:\,\Re z\geq0;\ |z|\leq1-2^{-n}-2^{-n-1}\text{ or }|z|\geq1-2^{-n-1}-2^{-n-2}\right\}$$
(see Figure \ref{fig:En}).
\begin{figure}
\centering
\begin{subfigure}{.5\textwidth}
\centering
{\includegraphics[width=0.4\linewidth]{./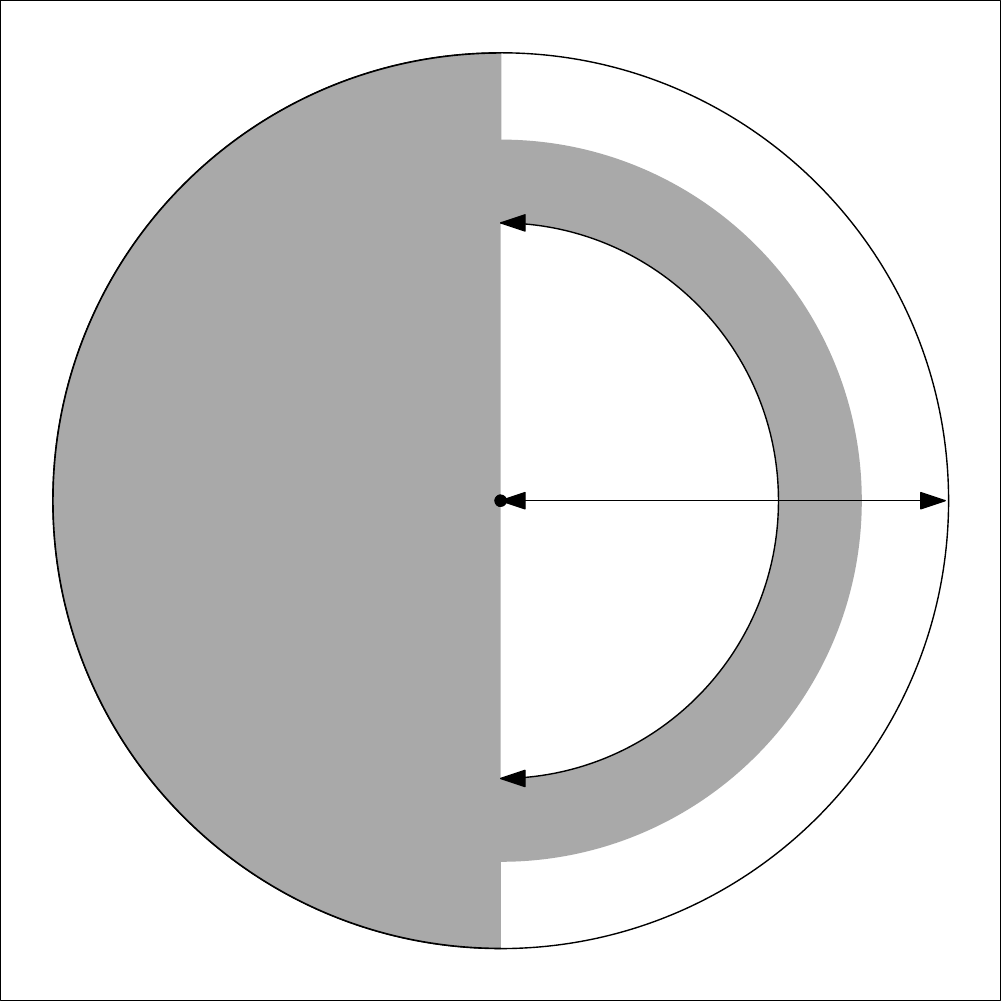}}
\caption{\label{fig:En} The set $E_n$.
}
\end{subfigure}%
\begin{subfigure}{.5\textwidth}
\centering
  \includegraphics[width=.4\linewidth]{./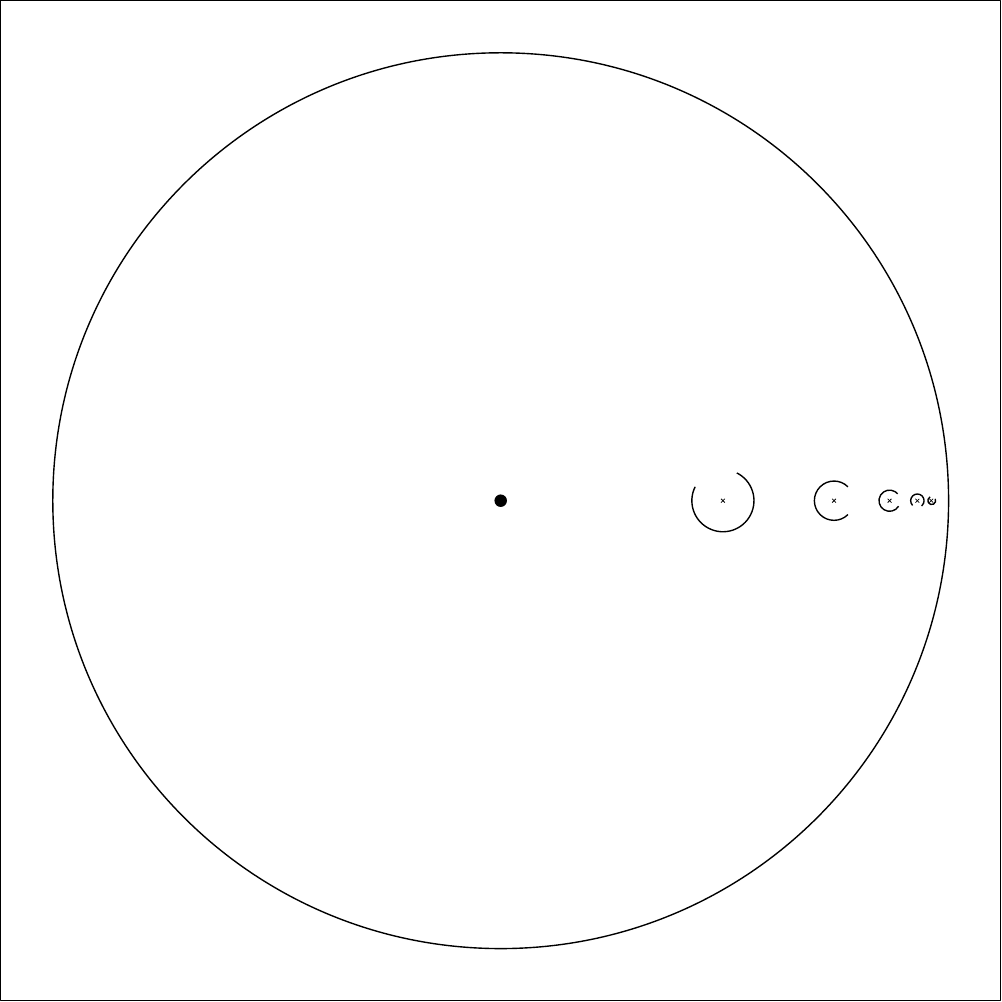}
  \caption{\label{fig:Omega^*} The domain $\Omega^*$ used in the proof of Theorems \ref{thm:nocompute_measure} and \ref{thm:nocompute_Dirichlet}}
\end{subfigure}
\centering
\caption{The domains used in the proof of Theorems \ref{thm:nocompute_measure} and \ref{thm:nocompute_Dirichlet}}
\end{figure}

We can compute a sequence $\ell_n$ such that
\begin{equation}\label{eq:lower}\binr{\omega_{x_n}^{E_n}}f=\omega_{x_n}^{E_n}\left\{z\,:\,|z|=1, \Re z\leq -1/2\right\}>2^{-\ell_n}\end{equation}

In our construction \eqref{eq:def:domain}, we take $A^\infty_n=\emptyset$. We can uniformly compute an arc $A_n^k$  of the circle
$x_n+r^k_n S^1$ which satisfy the condition
\begin{equation}\label{eq:TheoremC}
\omega_{x_n}^{B(x_n, r^k_n)}(A_n^k)>1-2^{-\ell_n-2}
\end{equation}

This sequence $A_n^k$ satisfy the  conditions of \eqref{eq:def:domain}, since, by \eqref{eq:up1},
$$\omega_0^{\DD\setminus (A^k_n\cup A^\infty_n)}\left(A^k_n\Delta A^\infty_n\right)=\omega_0^{\DD\setminus (A^k_n)}\left(B(x_n, r^k_n)\right)< 4^{-k}\leq 2^{-n-k}$$

Thus, by Lemma \ref{lem:harm_measure_computable}, $\binr{\omega_x^{\Omega^{*}}}f$ is computable for any $x\in\Omega$.

Note now that if $n\notin B$, then $E_n\subset\Omega$, and, by maximum principle and \eqref{eq:lower}
\begin{equation}\label{eq:bellow}
\binr{\omega_{x_n}^{\Omega^{*}}}f\geq\binr{\omega_{x_n}^{D_n}}f\geq2^{-\ell_n}
\end{equation}

On the other hand, if $n\in B$, then, since for some $k$, $\Omega^*\subset\DD\setminus A^k_n$, we can use \eqref{eq:TheoremC} and the maximum principle to obtain
\begin{equation}\label{eq:above}
\binr{\omega_{x_n}^{\Omega^{*}}}f<2^{-\ell_n-2}
\end{equation}

Assume that $\binr{\omega_{x_n}^{\Omega^{*}}}f$ is uniformly computable by some algorithm. Then we can use the algorithm to uniformly compute $2^{-\ell_n-1}$ approximation of $\binr{\omega_{x_n}^{\Omega^{*}}}f$. By \eqref{eq:bellow} and \eqref{eq:above}, it would allow us to decide whether $n\in B$. This contradicts no-computability of $B$ and proves Theorem C.

Now we just need to observe that Theorem \ref{thm:nocompute_Dirichlet} implies Theorem \ref{thm:nocompute_measure}.
\end{proof}

\begin{lem}\label{lem:non_comp_regular}
There exists a lower computable regular domain $\Omega$ with lower computable boundary and uniformly computable harmonic measure which is not computably regular.
\end{lem}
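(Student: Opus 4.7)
The plan is to adapt the construction of $\Omega^*$ from the proofs of Theorems~\ref{thm:nocompute_measure}--\ref{thm:nocompute_Dirichlet}. Let $B$ be a non-computable lower-computable set (say the halting set) with computable enumeration $\{b_k\}_k$, and let $\tilde x_n = 1 - 2^{-n}$. I will remove from $\DD$ a family of compact obstructions $\{C_n\}_n$ at the $\tilde x_n$'s, each containing a fixed computable regular ``base'' $A_n^\infty$ (to ensure regularity of $\Omega$), plus -- when $n = b_k \in B$ is enumerated at stage $k$ -- an additional fine-scale feature $D_n^k$ whose geometric parameters encode $k$. Setting $A_n^k := A_n^\infty \cup D_n^k$, the domain will be $\Omega := \DD \setminus \bigcup_n C_n$, with $C_n = A_n^\infty$ for $n \notin B$ and $C_n = A_n^{k(n)}$ for $n = b_{k(n)} \in B$.

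Properties (i)--(iii) should follow from the machinery already developed. By choosing each $D_n^k$ to lie inside a ball of doubly-exponentially small radius $\rho_n^k \le \exp(-2^{n+k+1})$ centred near $\tilde x_n$, Lemma~\ref{observation} will give the symmetric-difference estimate
\[
\omega_0^{\DD \setminus (A_n^k \cup A_n^\infty)}(A_n^k \Delta A_n^\infty) \le 2^{-n-k},
\]
so Lemma~\ref{lem:harm_measure_computable} yields computability of $\omega_0^\Omega$. Since each $C_n$ is uniformly computable along with the enumeration of $B$, both $\Omega$ and $\partial \Omega$ will be lower computable by the same kind of exhausting argument used in the proof of Lemma~\ref{lem:harm_measure_computable}; Theorem~\ref{thm:uni_comp} then promotes computability of $\omega_0^\Omega$ to uniform computability of $\omega^\Omega$ throughout $\Omega$. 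Regularity is immediate since each $A_n^\infty$ is a closed disc, hence regular in $\RR^d$, as is $\partial \DD$.

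The main obstacle is showing $\Omega$ is not computably regular. The strategy is to design each $D_n^k$ so that its presence creates a boundary configuration at $\tilde x_n$ whose local regularity modulus at a prescribed scale $m(n)$ is bounded above by a quantity depending on $k(n)$. A natural candidate is to let $A_n^\infty$ itself be a small Cantor-like cluster of nested shells around $\tilde x_n$, and take $D_n^k$ to be an additional sub-shell whose capacity depends on $k$; by Wiener's criterion, the regularity modulus at $\tilde x_n$ at scale $2^{-m}$ is then governed by a tail sum $\sum_{j \ge m} j/\log(1/\operatorname{cap}(K_{n,j}))$ which becomes $B$-dependent for large $m$. With $B$ taken to be the halting set, its enumeration function $k$ is not dominated by any computable function, so no computable $\varepsilon : \NN \to \QQ_{>0}$ can serve as a regularity modulus -- any such $\varepsilon$ would yield a computable upper bound on $k$, contradicting the non-computability of $B$. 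The technical crux is to verify two competing requirements simultaneously: (a) the Wiener-type quantitative estimate tying the tail capacity sum to the actual regularity modulus, which is delicate because the added sub-shells are microscopic yet must remain geometrically ``visible'' at scale $m(n)$; and (b) that each $D_n^k$ contributes negligibly to the harmonic measure seen from $0$, so that the symmetric-difference estimate above continues to hold. Balancing (a) and (b) by careful choice of shell radii and capacities is the heart of the proof.
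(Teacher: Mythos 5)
Your overall framework matches the paper's: both reuse the $\Omega^*$ construction of \eqref{eq:def:domain}, verify \eqref{eq:close_to_infinity} so that Lemma \ref{lem:harm_measure_computable} gives computability of $\omega_0^{\Omega^*}$, and promote this to uniform computability via lower computability and Theorem \ref{thm:uni_comp}. But the step you yourself flag as ``the heart of the proof'' --- defeating computable regularity --- is not carried out, and the mechanism you sketch for it does not work as stated. You set $A_n^k=A_n^\infty\cup D_n^k$, i.e.\ you remove \emph{more} when $n\in B$, and you want the extra piece $D_n^k$ to degrade the Wiener regularity modulus at the boundary point $\tilde x_n$. This has the monotonicity backwards: adding capacity near $\tilde x_n$ can only improve the Wiener tail sum there, so the regularity modulus at $\tilde x_n$ is always at least as good as the one determined by the computable set $A_n^\infty$ alone, and no $B$-dependence in the harmful direction is produced. (Your sketch is also internally inconsistent about what $A_n^\infty$ is --- first a closed disc centred near $\tilde x_n$, in which case a $D_n^k$ placed near $\tilde x_n$ is swallowed by $A_n^\infty$ and nothing changes; then a Cantor-like cluster, in which case regularity of $\tilde x_n$ is no longer ``immediate,'' and a closed sub-shell surrounding $\tilde x_n$ would disconnect the domain.)

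The missing idea is the paper's mechanism, which runs in the opposite direction. Take $A_n^\infty=B(x_n,r_n^n)$ (the whole ball removed when $n\notin B$) and, when $n=b_k$, replace it by a \emph{smaller} compact $A_n^{k+n}\subset B(x_n,r_n^n)$: an annular shell at the fixed scale $r_n^n$ with two thin slits (condition \eqref{eq:bounds1}, which both keeps $\Omega$ connected and gives \eqref{eq:close_to_infinity}), together with a tiny disc at distance $\sim r_n^{k+n}$ from $x_n$ whose harmonic measure seen from $x_n$ is $<1/2$ (condition \eqref{eq:bounds3}). Then $x_n$ becomes an \emph{interior} point with $\dist(x_n,\partial\Omega)\sim r_n^{k+n}$ --- a depth encoding the non-computable enumeration stage $k$ --- while $\omega_{x_n}^{\Omega}(B(x_n,r_n^n/4))<1/2$, so any computable regularity modulus $\eps(n)$ would yield a computable bound $k(n)$ on the stage at which $n$ can enter $B$, contradicting non-computability of $B$. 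This orientation ($A_n^k\subset A_n^\infty$, so that learning $n\in B$ \emph{adds} open set) is also what makes $\Omega^*=\Omega_0\cup\bigcup_k\bigl(B(x_{b_k},r_{b_k}^{b_k})\setminus A_{b_k}^{k+b_k}\bigr)$ manifestly lower computable; with your orientation, lower computability of $\Omega$ would need a separate argument.
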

\begin{proof}
Here we use the construction \eqref{eq:def:domain} with $A^\infty_n:=B(x_n, r^n_n)$ and for $n\leq k<\infty$
$$A^k_{n}=\left\{z\,:\,r^n_n\geq|z-x_n|\geq r^n_n/2;|\Im z|\geq d^k_n \right\}\cup B(x_n+r^k_n/10, e^k_n),$$
(see Figure \ref{fig:Ank}), where $d^k_n$ and $e^k_n$ are uniformly computed such that
\begin{eqnarray}
\omega_0^{\DD\setminus B(x_n, r_n^n)}(\left\{z\,:\,|z-x_n|=r^n_n;|\Im z|\leq d^k_n \right\})&<&2^{-k-n}\label{eq:bounds1}\\
\omega_{x_n}^{\DD}(B(x_n+r^k_n/10, e^k_n))&<&1/2\label{eq:bounds3}
\end{eqnarray}
\begin{figure}
\begin{center}
{\includegraphics[width=0.3\linewidth]{./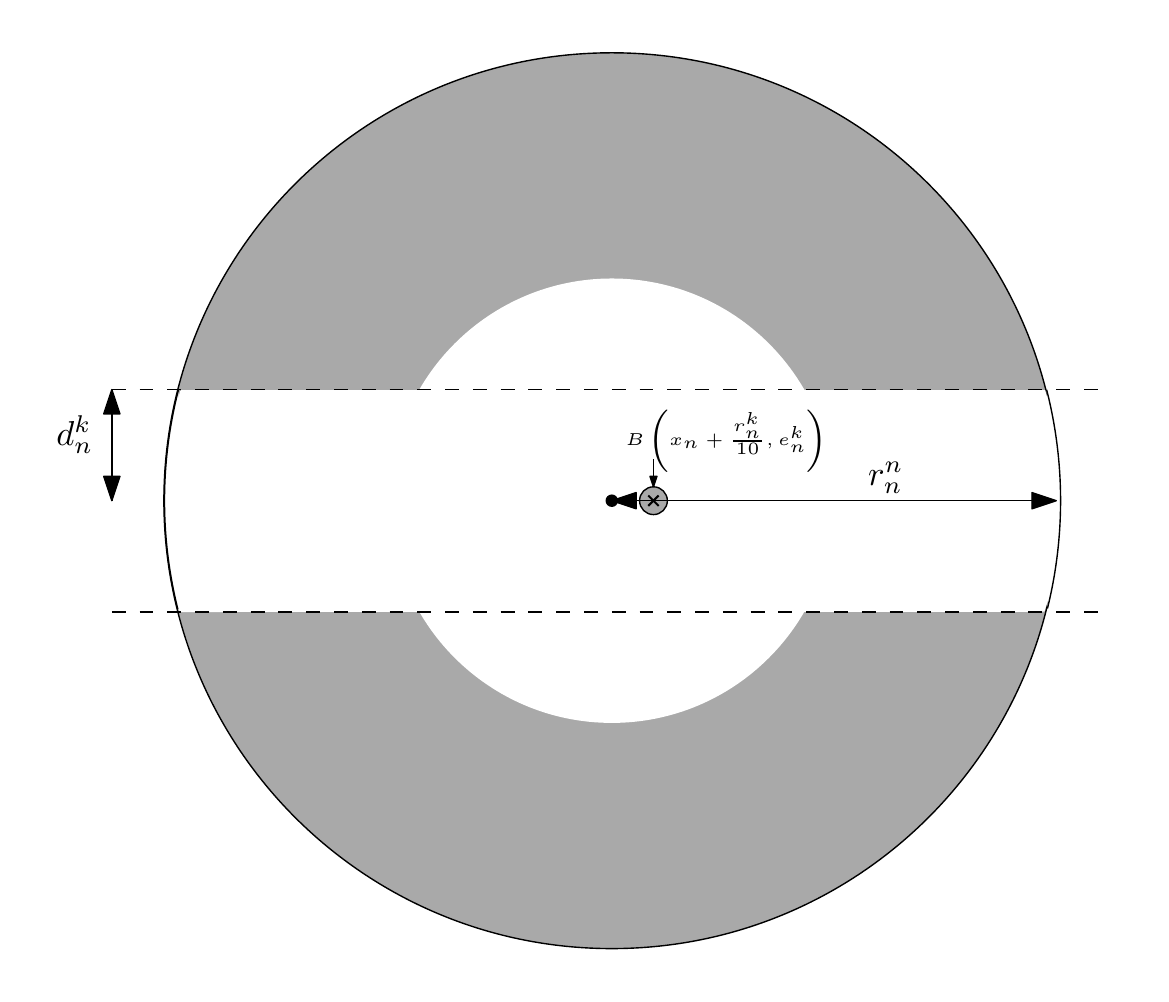}}
\caption{\label{fig:Ank} The set $A_n^k$ used in the proof of Lemma \ref{lem:non_comp_regular}.
}
\end{center}
\end{figure}
\eqref{eq:bounds1} implies that the condition \eqref{eq:close_to_infinity} is satisfied.

Also
$$\Omega^*=\Omega_0\cup\bigcup_k B(x_{b_k}, r^{b_k}_{b_k})\setminus A^{k+b_k}_{b_k}$$
so the domain $\Omega^*$ is lower computable. By Theorem \ref{thm:uni_comp} and Lemma \ref{lem:harm_measure_computable}, the harmonic measure of $\Omega^*$ is uniformly computable.

Note that by the maximum principle and \eqref{eq:bounds3},
\begin{equation}\label{eq:lowlow}
\omega_{x_n}^{\Omega^*}(B(x_n+r^k_n/10, e^k_n))<1/2\end{equation}
Assume that $\Omega^*$ is computably regular. Then we can compute $\eps(n)$ so that
$$
\dist(x,\partial\Omega)<\eps(n)\Rightarrow \omega_x^\Omega(B(x,r^n_n/4))>1-2^{-n}.
$$
and that would allow us to uniformly compute $k(n)$ such that
$$r^{k(n)+n}_n<\eps(n)$$

By \eqref{eq:lowlow}, if $n\in B,\ n=b_k$ then $\omega_{x_n}^\Omega(B(x_n,r^n_n/4))<1/2$ and $\dist(x_n,\partial\Omega)<r^{k+b_k}_n/4$, so $k\leq k(n)$. Thus to decide whether $n\in B$, we only need to know first $k(n)$ members of the sequence $\left\{b_k\right\}$. This contradicts non-computability of $B$.

\end{proof}

\end{document}